\tikzstyle{every picture}+=[remember picture]
\newtheorem{thm}{Theorem}[section]
\newtheorem{prop}[thm]{Proposition}
\newtheorem{cor}[thm]{Corollary}
\newtheorem{conj}[thm]{Conjecture}
\newtheorem{question}[thm]{Question}
\newtheorem{setup}[thm]{Setup}
\theoremstyle{remark}
\newtheorem{rem}[thm]{Remark}
\newtheorem{example}[thm]{Example}
\theoremstyle{definition}
\newtheorem{defi}[thm]{Definition}
\newcommand{\Z}{\mathbb{Z}}
\newcommand{\R}{\mathbb{R}}
\newcommand{\N}{\mathbb{N}}
\newcommand{\Hyp}{\mathbb{H}}
\DeclareMathOperator{\calU}{\mathcal{U}}
\DeclareMathOperator{\cat}{cat}
\DeclareMathOperator{\Am}{Am}
\DeclareMathOperator{\amcat}{\cat_{\Am}}
\DeclareMathOperator{\im}{im}
\DeclareMathOperator{\Aff}{Aff}
\DeclareMathOperator{\G}{\mathcal{G}}
\DeclareMathOperator{\Poly}{\textup{Poly}^{\textup{fg}}}
\DeclareMathOperator{\Isom}{Isom}
\DeclareMathOperator{\GL}{GL}
\DeclareMathOperator{\ess}{ess}
\def\actson{\curvearrowright}
\newcommand\norm{\bBigg@{0.8}}
\newcommand{\ifsv}[2][norm]{\csname #1l\endcsname\bracevert\!#2\!%
                            \csname #1r\endcsname\bracevert}
\newcommand{\ifsvp}[3][norm]{\csname #1l\endcsname\bracevert\!#2\!%
                            \csname #1r\endcsname\bracevert\!^{#3}}
\def\sv#1{\|#1\|}
\def\stisv#1{\|#1\|_\Z^\infty}
\newcommand{\essn}[2][norm]{\csname #1l\endcsname\vert #2 \csname#1r\endcsname\vert_{1,\ess}}
\author[A.~Casali]{Alberto Casali}
\address{Dipartimento di Matematica \\ Universit\`{a} di Pisa \\ 56127~Pisa}
\email{alberto.casali@phd.unipi.it}
\author[M.~Moraschini]{Marco Moraschini}
\address{Dipartimento di Matematica\\ Universit\`{a} di Bologna\\ 40126~Bologna}
\email{marco.moraschini2@unibo.it}
\keywords{complete affine manifolds, injective Seifert fiberings, amenable category, simplicial volume, minimal volume entropy, integral approximation}
\subjclass[2020]{53A15, 57R19, 53C23}
\title[Topological volumes of certain complete affine manifolds]{Topological volumes of certain complete affine manifolds}
\date{\today.\ \copyright{\ A.~Casali, M.~Moraschini}.}
\begin{document}

\begin{abstract}
  We provide an estimate of the amenable category of oriented closed connected complete affine manifolds whose fundamental group contains an infinite amenable normal subgroup. As an application we show that all such manifolds have zero simplicial volume. This answers a question by L\"uck in the case of complete affine manifolds.

  Our construction also provides the vanishing of stable integral simplicial volume and minimal volume entropy. This means that such manifolds satisfy integral approximation.
\end{abstract}

\maketitle


\section{Introduction}

Understanding the topology of affine manifolds is notoriously hard and there are several conjectures that are still widely open. For instance the celebrated Chern conjecture predicts that all closed affine manifolds have vanishing Euler characteristic. This is known to be true for complete affine manifolds~\cite{kostant1975euler} and for special affine manifolds~\cite{klingler2017chern} (a conjecture by Markus would imply that this class of manifolds is in fact equivalent to the previous one~\cite{markus}). Moreover, Chern conjecture also holds for other specific classes of manifolds including, e.g., surfaces~\cite{benzecri1960varietes}, higher rank irreducible locally symmetric manifolds~\cite{goldman1984radiance}, manifolds that are locally a product of hyperbolic planes~\cite{bucher2011milnor}, complex hyperbolic manifolds~\cite{pieters} and certain aspherical affine manifolds~\cite{BCL}. 

In this paper we restrict our attention to \emph{complete} affine manifolds. 

\noindent
A closed affine manifold~$M$ is characterized by two maps: the (unique up to conjugation) \emph{holonomy representation} $\rho \colon \pi_1(M) \to \Aff(\R^n) \cong \R^n \rtimes \textup{GL}(n, \R)$ and the (unique up to affine transformations) \emph{developing map} $D \colon \widetilde{M} \to \R^n$. An affine manifold is said to be complete if the developing map~$D$ is a diffeomorphism -- in particular, such manifolds are aspherical and the holonomy maps are injective~\cite{goldman2022geometric}.

An open set $U \subset M$ is called \emph{amenable} if the image $ \im(\pi_1(U, x) \to \pi_1(M, x))$ is an amenable group for all $x \in U$.
We denote by $\amcat(M)$ the minimum cardinality of an open amenable cover of $M$. Our main result is the following:

\begin{thm}[Theorem~\ref{thm:main:amcat}]\label{thm:intro:amcat:complete}
    Let $M$ be a closed connected complete affine manifold such that its fundamental group contains an infinite amenable normal subgroup. Then $\amcat(M) \leq \dim(M)$.
\end{thm}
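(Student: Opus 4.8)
The plan is to use the affine structure to equip $M$ with an injective Seifert fibering over a low-dimensional closed aspherical orbifold with amenable fibres, and then to pull back a good cover of the base. To set up, recall that since $M$ is a closed complete affine $n$-manifold, its developing map identifies $\widetilde M$ with $\R^n$ and makes $\Gamma := \pi_1(M)$ act on $\R^n$ freely, properly discontinuously, cocompactly, and by affine transformations; in particular $M$ is aspherical, $\Gamma$ is torsion-free, and $\cd(\Gamma) = n$. Write $A \triangleleft \Gamma$ for the given infinite amenable normal subgroup; it is torsion-free and still acts properly discontinuously by affine transformations on $\R^n$.

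The first step is to show that $A$ is virtually polycyclic. As $\Aff(\R^n)$ embeds in $\GL(n+1,\R)$, the group $A$ is linear, and since it is amenable it contains no nonabelian free subgroup; hence $A$ is virtually solvable by the Tits alternative. Combining this with the structure theory of virtually solvable groups acting properly discontinuously by affine transformations on $\R^n$ (the virtually solvable instance of the Auslander picture) yields that $A$ is virtually polycyclic. Replacing $A$ by a characteristic finite-index subgroup — for instance its solvable radical, which is still normal in $\Gamma$, infinite and amenable, and now polycyclic — I may further assume that $A$ is torsion-free polycyclic. Put $h := h(A) = \cd(A)$; since $A$ is infinite we have $h \ge 1$, and since $A \le \Gamma$ we have $h \le \cd(\Gamma) = n$.

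A torsion-free polycyclic group is the fundamental group of a closed aspherical infra-solvmanifold $F$ of dimension $h$, realized as a quotient of a simply connected solvable Lie group $S \supseteq A$ — a syndetic hull of $A$ — acting on $\widetilde M = \R^n$ so as to extend the $A$-action. Because $A$ is normal in $\Gamma$ and such a hull is essentially unique, the orbit foliation of $S$ on $\R^n$ is invariant under the whole $\Gamma$-action, so it descends to $M$ and produces an injective Seifert fibering $\pi \colon M \to B$ over a closed aspherical orbifold $B$ with $\dim B = n - h$, hence $0 \le \dim B \le n-1$, and with typical fibre $F$. Triangulating $B$ and taking, for $i = 0, \dots, \dim B$, the set $U_i$ equal to the union of the open stars of the barycenters of the $i$-simplices, one obtains an open cover $U_0, \dots, U_{\dim B}$ of $B$ of cardinality $\dim B + 1$ in which each $U_i$ is a disjoint union of uniformizable orbifold charts (balls modulo finite groups). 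Pulling this back along $\pi$ gives an open cover $\pi^{-1}(U_0), \dots, \pi^{-1}(U_{\dim B})$ of $M$ of cardinality $\dim B + 1 \le n$; and for each chart $V \subseteq B$ with uniformizing group $F_V$ the image of $\pi_1(\pi^{-1}(V))$ in $\Gamma$ is an extension of the finite group $F_V$ by $A$ — here one uses that $A$ is normal, so that all its conjugates in $\Gamma$ coincide with $A$ — and is therefore amenable. Consequently each $\pi^{-1}(U_i)$ is an amenable open subset, and $\amcat(M) \le \dim B + 1 \le \dim(M)$. (If $h = n$ then $B$ is $0$-dimensional, $\Gamma$ is itself virtually polycyclic, and $\amcat(M) = 1$, which is consistent with this bound.)

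The main obstacle is the middle of the argument: promoting the algebraic statement that $A$ is a virtually polycyclic normal subgroup to a genuinely $\Gamma$-equivariant geometric fibering of $\R^n$. This requires (i) the structural input that a virtually solvable group acting properly discontinuously by affine transformations on $\R^n$ is virtually polycyclic, and (ii) the construction of a syndetic hull of $A$ inside $\Aff(\R^n)$ whose orbit foliation is sufficiently canonical to be preserved by all of $\Gamma$, and hence to descend to $M$, with the finite point-isotropy accounting for the orbifold singularities of $B$. Once this geometric fibering is available, the remaining steps — the good cover of the orbifold and the amenability of the fibred pieces — are routine.
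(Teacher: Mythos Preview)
Your overall architecture matches the paper's: use linearity and the Tits alternative to upgrade the amenable normal subgroup to something solvable, produce from this an injective Seifert fibering of $M$ over a lower-dimensional orbifold, then triangulate the base and pull back the open-star cover. The last step is exactly the paper's Theorem~\ref{thm:amcat:estimate:SF}.

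The substantive difference is in the middle step, and here the paper's route is both simpler and fully justified. You stay with a torsion-free polycyclic normal subgroup~$A$ and invoke a syndetic hull $S \subset \Aff(\R^n)$, then argue that the $S$-orbit foliation on $\R^n$ is $\Gamma$-invariant ``because such a hull is essentially unique.'' As you yourself flag, this $\Gamma$-invariance is the real issue: syndetic hulls of polycyclic groups are not canonical in general, so one must exhibit a specific construction (e.g.\ via an algebraic hull) and check that $\Gamma$-conjugation preserves it. The paper sidesteps this entirely with a short algebraic trick: after passing to a characteristic solvable finite-index subgroup $S'\le A$, take the \emph{last nontrivial term} of the derived series of $S'$. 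This subgroup is abelian, characteristic in $S'$ (hence in $A$, hence normal in $\Gamma$), torsion-free (asphericity), and infinite. Now one is exactly in the setting of Kamishima's result (Example~\ref{ex:inj:SF}.(4)), which directly supplies the smooth injective Seifert fibering with typical fibre a torus of positive dimension; no hull construction is needed on your side.

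In short: your proof is a correct outline but leaves the hardest step (equivariance of the solvable foliation) as an acknowledged obstacle; the paper's derived-series reduction to an \emph{abelian} normal subgroup removes that obstacle by allowing a black-box citation. Your route, if completed, would give a larger fibre and hence a slightly sharper bound $\amcat(M)\le n-h(A)+1$, but at the cost of reproving a nontrivial piece of Fried--Goldman/Kamishima-type structure theory.
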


In particular, the previous result remains true if we restrict our attention to the smaller class of open covers $\calU$ such that for all $U \in \calU$ the image $\im(\pi_1(U, x) \to \pi_1(M, x))$ has \emph{polynomial growth} for all $x \in U$.

\medskip

Finding an example of a complete affine manifold that does not satisfy the hypothesis of our theorem is hard. Indeed, the Auslander conjecture predicts that all closed connected affine manifolds have amenable fundamental group. This shows that any closed complete affine manifold whose fundamental group does not contain an infinite amenable normal subgroup would be a counterexample of Auslander conjecture.
On the other hand, Auslander conjecture is known to be true in dimension~$n \leq 6$~\cite{abels2020auslander} and so for closed connected complete affine manifolds of those dimensions we get that the amenable category is equal to $1$.

\subsection{Applications to simplicial volume}

Simplicial volume is a topological invariant of closed manifolds that has been introduced by Gromov~\cite{vbc}. For affine manifolds it provides an upper bound for the Euler characteristic~\cite[Section~13.4]{frigerio}. This led L\"uck~\cite[Question~14.53]{lueckl2} and independently Bucher--Connell--Lafont~\cite[p.~1288]{BCL} to formulate a stronger version of Chern conjecture:

\begin{conj}\label{conj:sv:affine:zero}
Let $M$ be a closed affine manifold. Then the simplicial volume of $M$ vanishes.
\end{conj}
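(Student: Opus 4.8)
The plan is to reduce the conjecture to a statement about amenable category and then to invoke the Gromov vanishing theorem. Up to passing to the orientation double cover (which is again affine), we may assume $M$ oriented; recall that for a closed oriented connected $n$-manifold an amenable open cover with at most $n=\dim(M)$ elements automatically has multiplicity at most $n$, so Gromov's vanishing theorem yields $\|M\|=0$. Thus it suffices to establish
\[
  \amcat(M) \le \dim(M) \quad \text{for every closed affine manifold } M,
\]
which is exactly the conclusion of Theorem~\ref{thm:intro:amcat:complete} but now \emph{without} the completeness and normal-subgroup hypotheses. I would split the argument into the complete and the non-complete case, the latter being the genuinely new frontier.

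\textbf{The complete case.} For complete affine $M$ the developing map is a diffeomorphism, so $\widetilde{M}\cong\R^n$, the manifold is aspherical, and $M=\R^n/\Gamma$ with $\Gamma=\rho(\pi_1(M))$ acting freely, properly discontinuously and cocompactly by affine transformations (and $\Gamma$ is automatically infinite for $n\ge 1$). Here the conjecture is essentially the Auslander conjecture in disguise: if $\Gamma$ is amenable (known for $n\le 6$, and conjecturally always), then $\|M\|=0$ by the classical vanishing of simplicial volume for manifolds with amenable fundamental group. To obtain an unconditional result one replaces amenability of the whole group by the weaker hypothesis of Theorem~\ref{thm:intro:amcat:complete}. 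The step I would pursue is therefore purely group-theoretic: show that the holonomy image of every closed complete affine manifold contains an infinite amenable normal subgroup. This is a fragment of Auslander that may be accessible even where the full conjecture is not, e.g. by exploiting the Fitting subgroup of the Zariski closure of $\Gamma$ in $\Aff(\R^n)$ and the structure of its translational part, and it would immediately feed into Theorem~\ref{thm:intro:amcat:complete}.

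\textbf{The non-complete case.} This is where I expect the real difficulty. Without completeness the developing map $D\colon\widetilde{M}\to\R^n$ is only a local diffeomorphism onto a proper open subset, $M$ need not be aspherical, and there is no description of $M$ as a quotient of $\R^n$; in particular none of the machinery of injective Seifert fiberings is available. I would attack this through the structure theory of radiant and non-radiant affine manifolds (Fried--Goldman--Hirsch): a radiant structure carries a canonical flow whose leaves ought to supply the pieces of an amenable cover, reducing the problem to a lower-dimensional base, while the special-affine subcase (existence of a parallel volume form) aligns with Klingler's theorem on the Chern side. The aim is, in every case, to produce from the affine atlas a cover by sets that are amenable in $M$ and have multiplicity at most $\dim(M)$; note, however, that even granting Klingler one must still upgrade the vanishing of $\chi(M)$ to the vanishing of $\|M\|$, which genuinely requires the cover construction rather than merely the Euler-class estimate.

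\textbf{Main obstacle.} The principal obstruction is the non-complete case, for which no analogue of the Auslander--Bieberbach structure theory exists: one cannot control $\pi_1(M)$, the developing image can be geometrically wild, and there is no canonical amenable fibering to exploit. Even in the complete case the argument is presently conditional, resting either on low-dimensional instances of the Auslander conjecture or on the still-unverified claim that every complete affine holonomy group contains an infinite amenable normal subgroup. Removing these hypotheses --- and thereby proving the conjecture in full --- requires new group-theoretic input on affine holonomy groups together with a genuinely new geometric construction of amenable covers in the absence of completeness; this is why the conjecture remains open and why the present paper settles only the complete case under a normal-subgroup hypothesis.
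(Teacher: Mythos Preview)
The statement you were asked to prove is a \emph{conjecture}, not a theorem: the paper states it as Conjecture~\ref{conj:sv:affine:zero} (attributed to L\"uck and to Bucher--Connell--Lafont) and does not prove it. There is therefore no ``paper's own proof'' to compare against; the paper establishes only the partial result of Corollary~\ref{cor:main:intro}, namely the complete case under the additional hypothesis of an infinite amenable normal subgroup.

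Your proposal is not a proof and, to your credit, does not pretend to be one: you correctly diagnose that the conjecture is open, correctly locate the two missing ingredients (an unconditional infinite amenable normal subgroup in the complete case, and essentially everything in the incomplete case), and correctly observe that the paper's contribution is precisely the complete case under the normal-subgroup hypothesis. Your suggested route in the complete case --- reduce to finding an infinite amenable normal subgroup and then feed it into Theorem~\ref{thm:intro:amcat:complete} --- is exactly the logical structure of the paper, so on that front you and the paper are aligned. One small correction: you write that producing such a subgroup ``may be accessible even where the full conjecture is not''; in fact, as the paper notes just after Theorem~\ref{thm:intro:amcat:complete}, any closed complete affine manifold whose fundamental group lacks an infinite amenable normal subgroup would already be a counterexample to Auslander, so this step is equivalent to (not weaker than) the Auslander conjecture.

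In summary: there is no gap to name because there is no claimed proof; your write-up is an accurate assessment of the state of the problem rather than a proof attempt, and it matches the paper's own framing.
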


In the case of complete affine manifolds, Bucher, Connell and Lafont proved the following:

\begin{thm}[{\cite[Corollary~1.1]{BCL}}]
Let $M$ be an oriented closed connected complete affine manifold whose holonomy contains a pure translation, then the simplicial volume of $M$ vanishes.
\end{thm}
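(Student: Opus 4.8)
The plan is to derive this as a special case of the amenable–category bound in Theorem~\ref{thm:intro:amcat:complete}, combined with Gromov's vanishing theorem for simplicial volume. Set $n=\dim M$. Since $M$ is complete, the holonomy $\rho\colon\pi_1(M)\to\Aff(\R^n)\cong\R^n\rtimes\GL(n,\R)$ is injective, so I identify $\pi_1(M)$ with $\Gamma:=\rho(\pi_1(M))\subseteq\Aff(\R^n)$; by hypothesis $\Gamma$ contains a non-trivial translation $t_v=(v,I)$ with $v\neq 0$.

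The first step is to produce an infinite amenable normal subgroup of $\pi_1(M)$. Let $T:=\Gamma\cap\R^n$ be the subgroup of pure translations in $\Gamma$, where $\R^n$ denotes the (normal) translation subgroup of $\Aff(\R^n)$. Then $T$ is normal in $\Gamma$, being the intersection of $\Gamma$ with a normal subgroup of $\Aff(\R^n)$; concretely $(w,A)\,t_u\,(w,A)^{-1}=t_{Au}$, so conjugating a pure translation in $\Gamma$ produces a pure translation in $\Gamma$. Moreover $T$ is abelian, hence amenable, and infinite, since $t_v^m=t_{mv}\neq\id$ for every $m\neq0$. (If one wishes, $T$ is also discrete in $\R^n$ by proper discontinuity of the $\Gamma$-action, hence $T\cong\Z^k$ with $k\geq1$; but only ``infinite, amenable, normal'' is needed below.)

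Now Theorem~\ref{thm:intro:amcat:complete} applies verbatim and yields $\amcat(M)\leq n$: there is an open amenable cover of $M$ with at most $n$ members, hence of multiplicity at most $n$. Feeding this into Gromov's vanishing theorem (see e.g.~\cite{vbc,frigerio}) -- a closed oriented $n$-manifold admitting an amenable open cover of multiplicity $\leq n$ has vanishing simplicial volume -- gives $\|M\|=0$, which is the assertion. (Only at this last step is orientability used; the category bound itself does not require it.)

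In this route there is essentially no obstacle beyond Theorem~\ref{thm:intro:amcat:complete}, where all the work is concentrated. Were one to ask for a self-contained proof, the main difficulty would be exactly the one that theorem resolves: constructing an amenable open cover of controlled multiplicity out of the geometry of a complete affine manifold whose holonomy carries a non-trivial translational lattice -- which is where the injective Seifert–fibering / syndetic-hull structure of such manifolds enters. An alternative self-contained strategy would be to pass to a finite cover carrying an affine bundle structure with torus fibers and induct on dimension using the behaviour of simplicial volume under such fiberings, but the cover-theoretic argument subsumed by Theorem~\ref{thm:intro:amcat:complete} is the cleaner one.
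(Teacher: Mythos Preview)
Your argument is correct and matches the paper's treatment: the paper cites this result from \cite{BCL} and then observes (immediately after the statement) that ``holonomy contains a pure translation'' means $\rho(\pi_1(M))\cap\R^n$ is nontrivial, hence $\pi_1(M)$ has an infinite abelian normal subgroup, so the statement is subsumed by Corollary~\ref{cor:main:intro} (i.e.\ Theorem~\ref{thm:intro:amcat:complete} plus Gromov's Vanishing Theorem). Your derivation is exactly this special case, with the normality and infiniteness of $T=\Gamma\cap\R^n$ spelled out.
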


Recall that the holonomy of a complete affine manifold~$M$ \emph{contains a pure traslation} if the subgroup~$\rho(\pi_1(M)) \cap \R^n < \Aff(\R^n)$ is nontrivial. This shows that the fundamental groups of such manifolds contain an infinite abelian normal subgroup. On the other hand, there exist complete affine manifolds that do not contain pure translations but whose fundamental groups still have an infinite amenable normal subgroup. Such manifolds can be already constructed in dimension~$2$ due to a result by Kuiper~\cite{kuiper1953surfaces} (Example~\ref{ex:kuiper}).

A direct application of Gromov's Vanishing Theorem (Theorem~\ref{thm:gromov:van:thm}) shows that Theorem~\ref{thm:intro:amcat:complete} implies the following:

\begin{cor}[Corollary~\ref{cor:van:sv:stisv}]\label{cor:main:intro}
    Let $M$ be an oriented closed connected complete affine manifold whose fundamental group contains an infinite amenable normal subgroup. Then the simplicial volume of $M$ vanishes.
\end{cor}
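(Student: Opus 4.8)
The plan is to deduce the vanishing of simplicial volume directly from Theorem~\ref{thm:intro:amcat:complete} via Gromov's Vanishing Theorem. Recall (Theorem~\ref{thm:gromov:van:thm}, cited above) that if a closed connected oriented $n$-manifold $M$ admits an open cover by amenable subsets of multiplicity at most $n$ -- equivalently, if $\amcat(M) \leq n = \dim(M)$ -- then $\|M\| = 0$. So the only work is to check that the hypotheses of Gromov's Vanishing Theorem are met, and these are exactly what Theorem~\ref{thm:intro:amcat:complete} supplies.

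Concretely, let $M$ be an oriented closed connected complete affine manifold whose fundamental group $\pi_1(M)$ contains an infinite amenable normal subgroup. The completeness hypothesis and the normal amenable subgroup hypothesis are precisely those of Theorem~\ref{thm:intro:amcat:complete}, so that theorem applies and yields $\amcat(M) \leq \dim(M)$. By definition of $\amcat$, this means there is an open cover $\calU$ of $M$ consisting of amenable open subsets with $|\calU| \leq \dim(M)$; in particular the multiplicity (the maximal number of sets of $\calU$ with nonempty common intersection) is at most $\dim(M) = n$. Feeding this cover into Gromov's Vanishing Theorem gives $\|M\| = 0$.

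I expect there to be essentially no obstacle here: the corollary is a formal consequence of the main theorem together with a cited black-box result. The one point worth a sentence of care is making sure the two notions of ``amenable cover of bounded size/multiplicity'' match up, i.e. that bounding the cardinality of the cover by $n$ automatically bounds its multiplicity by $n$ (trivially true) so that Gromov's hypothesis is verified; after that the conclusion is immediate. One could also remark, as the paper does in the statement following Theorem~\ref{thm:intro:amcat:complete}, that the cover can be taken with charts of polynomial growth image, which strengthens the input but is not needed for the vanishing of $\|M\|$.
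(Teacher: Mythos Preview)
Your proposal is correct and matches the paper's own argument essentially verbatim: the paper derives Corollary~\ref{cor:van:sv:stisv} (and hence Corollary~\ref{cor:main:intro}) by combining Theorem~\ref{thm:main:amcat} (the restatement of Theorem~\ref{thm:intro:amcat:complete}) with Gromov's Vanishing Theorem~\ref{thm:gromov:van:thm}, exactly as you do. The only extra ingredient the paper mentions is residual finiteness of $\pi_1(M)$ via Mal'cev, but that is needed only for the stable integral simplicial volume part of Corollary~\ref{cor:van:sv:stisv}, not for the vanishing of $\sv{M}$ itself; your discussion of multiplicity versus cardinality is harmless but unnecessary, since Theorem~\ref{thm:gromov:van:thm} is already stated in terms of $\amcat$.
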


Even if our result does not cover all complete affine manifolds, it shows that for complete affine manifolds the following question by L\"uck has a positive answer:

\begin{question}[{\cite[Question~14.39]{lueckl2}}]\label{quest:luck}
    Let $M$ be an oriented closed connected aspherical manifold. Suppose that the fundamental group of $M$ contains an infinite amenable normal subgroup. Is it true that the simplicial volume of $M$ vanishes?
\end{question}

Following a construction by Lee and Raymond~\cite{leeraymond}, in Section~\ref{sec:concluding:rem} we show that L\"uck's question has a positive answer when the infinite amenable normal subgroup is virtually poly-$\Z$ and the quotient group satisfies some natural hypotheses (Theorem~\ref{thm:polyZkernel}).

\subsection{Applications to integral approximation}

Theorem~\ref{thm:intro:amcat:complete} also provides a new family of manifolds satisfying integral approximation~\cite{LMS, loehergodic}:

\begin{cor}[Corollary~\ref{cor:van:sv:stisv}]
    Let $M$ be an oriented closed connected complete affine manifold whose fundamental group contains an infinite amenable normal subgroup. Then \[
    \stisv{M} = \sv{M} = 0.
    \]
\end{cor}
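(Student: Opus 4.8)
The plan is to derive both vanishing statements from Theorem~\ref{thm:intro:amcat:complete} by feeding the open covers it produces into the vanishing theorems that bound the relevant topological volumes in terms of amenable covers. Set $n = \dim M$. Since $\pi_1(M)$ has an infinite amenable normal subgroup, Theorem~\ref{thm:intro:amcat:complete} produces an open cover $\calU$ of $M$ with at most $n$ members, each amenable in $M$; by the refinement recorded right after that theorem, $\calU$ can moreover be chosen so that every image $\im(\pi_1(U,x) \to \pi_1(M,x))$, with $U \in \calU$ and $x \in U$, has \emph{polynomial growth}. In particular, since $\calU$ has at most $n$ members, it has multiplicity at most $n$. (Recall also that $M$ is aspherical, being a complete affine manifold.)

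First I would record $\sv{M} = 0$, which is precisely Corollary~\ref{cor:main:intro}: Gromov's Vanishing Theorem (Theorem~\ref{thm:gromov:van:thm}), applied to the amenable cover $\calU$ of multiplicity at most $n$, yields $\sv{M} = 0$. The substantial point is $\stisv{M} = 0$; here amenability of $\calU$ alone is not enough, and the polynomial-growth sharpening of $\calU$ enters in an essential way. I would invoke the corresponding vanishing theorem for the stable integral simplicial volume --- and, simultaneously, for the minimal volume entropy --- in terms of small open covers whose fundamental-group images have polynomial growth (a ``polynomial-growth'' analogue of Gromov's Vanishing Theorem; compare the results on integral approximation \cite{LMS, loehergodic} and on minimal volume entropy). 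Applied to $\calU$, this gives $\stisv{M} = 0$, and in fact the minimal volume entropy of $M$ vanishes too. Since $0 \le \sv{M} \le \stisv{M}$ always, we conclude $\stisv{M} = \sv{M} = 0$, so that $M$ satisfies integral approximation.

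The step I expect to be the crux is the construction of the cover in the first paragraph: not the bare existence of an amenable open cover of cardinality at most $n$, but the upgrade to one whose fundamental-group images have \emph{polynomial} growth (note that the amenable normal subgroup itself need not have polynomial growth, so this is genuinely finer than amenability). This stronger output is what the argument behind Theorem~\ref{thm:intro:amcat:complete} supplies --- assembled from the infinite amenable normal subgroup, the quotient group, and an injective Seifert fibering --- and it is exactly what separates the vanishing of the stable integral simplicial volume (and of the minimal volume entropy) from the easier vanishing of the classical simplicial volume. Given that stronger cover, the remaining steps are direct applications of the two vanishing theorems.
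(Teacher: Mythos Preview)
Your argument has a misconception and, more importantly, a genuine gap. The misconception is the claim that ``amenability of $\calU$ alone is not enough'' for $\stisv{M}=0$ and that the polynomial-growth upgrade is essential there. It is not: the vanishing theorem for stable integral simplicial volume (Theorem~\ref{thm:van:thm:sisv}, from~\cite{LMS}) only requires $\amcat(M)\le\dim(M)$ together with asphericity and \emph{residual finiteness} of $\pi_1(M)$. The polynomial-growth sharpening $\cat_{\Poly}(M)\le\dim(M)$ is what feeds into Babenko--Sabourau's result (Theorem~\ref{thm:BS:vanishing:minent}) to kill the \emph{minimal volume entropy}; it plays no role for $\stisv{M}$. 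So your last paragraph identifies the wrong ``crux''.

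The genuine gap is that you never check residual finiteness of $\pi_1(M)$, and without it Theorem~\ref{thm:van:thm:sisv} does not apply. This is exactly the extra input the paper supplies: since $M$ is a \emph{complete} affine manifold, the holonomy $\rho\colon\pi_1(M)\hookrightarrow\Aff(\R^n)\le\GL(n+1,\R)$ is injective, so $\pi_1(M)$ is a finitely generated linear group and hence residually finite by Mal'cev's theorem~\cite{malcev}. Once you add this observation, the paper's route is simply: $\amcat(M)\le\dim(M)$ (Theorem~\ref{thm:main:amcat}) $+$ asphericity $+$ residual finiteness $\Rightarrow$ Theorem~\ref{thm:van:thm:sisv} gives $\stisv{M}=\sv{M}=0$. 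No polynomial-growth cover is needed for this corollary.
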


Here $\stisv{\cdot}$ and $\sv{\cdot}$ denote the stable integral simplicial volume and the simplicial volume, respectively (Section~\ref{sec:sv}).

\subsection{Applications to minimal volume entropy}

Minimal volume entropy is another topological invariant of closed manifolds introduced by Gromov~\cite{vbc}. Also in this case (the refined version in terms of polynomial growth of) Theorem~\ref{thm:intro:amcat:complete} together with a vanishing result by Babenko and Sabourau~\cite{bsfibre} implies the following:

\begin{cor}[Corollary~\ref{cor:minent:van}]
    Let $M$ be an oriented closed connected complete affine manifold whose fundamental group contains an infinite amenable normal subgroup. Then the minimal volume entropy of $M$ vanishes.
\end{cor}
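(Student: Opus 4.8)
The plan is to deduce this purely formally from the polynomial-growth refinement of Theorem~\ref{thm:intro:amcat:complete} together with the fibered vanishing criterion of Babenko--Sabourau~\cite{bsfibre}; no further geometry of affine manifolds enters. Write $n=\dim(M)$ and let $\omega(M)$ denote the minimal volume entropy of $M$.

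First I would invoke the strengthening of Theorem~\ref{thm:intro:amcat:complete} noted immediately after its statement: $M$ admits an open cover $\calU=\{U_1,\dots,U_n\}$ of cardinality $n$ such that, for every $i$ and every $x\in U_i$, the image $\im\bigl(\pi_1(U_i,x)\to\pi_1(M,x)\bigr)$ has polynomial growth. Choosing a partition of unity subordinate to $\calU$ then produces a continuous map $f\colon M\to N(\calU)$ to the nerve of $\calU$; since $\calU$ has $n$ members, $N(\calU)$ is a subcomplex of $\Delta^{n-1}$, so $\dim N(\calU)\le n-1<n=\dim(M)$. Moreover, the preimage under $f$ of the open star of a vertex $v_i$ is contained in $U_i$, whence the image in $\pi_1(M)$ of the fundamental group of any such preimage (taken componentwise, and using that polynomial growth passes to subgroups and is conjugation-invariant) has polynomial, hence subexponential, growth.

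At this point I would apply the Babenko--Sabourau vanishing theorem~\cite{bsfibre}: an oriented closed connected $n$-manifold admitting a map to a complex of dimension $<n$ whose fibers all have $\pi_1$-image of subexponential growth in $\pi_1(M)$ has vanishing minimal volume entropy. Applied to the map $f$ constructed above, this gives $\omega(M)=0$, as claimed. The hard part is not any new argument but the interface bookkeeping: one has to present the cover produced by Theorem~\ref{thm:intro:amcat:complete} --- equivalently the nerve map $f$ --- in exactly the form required by the statement of~\cite{bsfibre}, and one must be careful to use the \emph{polynomial}-growth version of the cover, since an amenable cover of cardinality $n$ only yields vanishing of the simplicial volume via Gromov's Vanishing Theorem, whereas the finer growth hypothesis is precisely what forces the minimal volume entropy to vanish as well.
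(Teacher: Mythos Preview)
Your proposal is correct and follows essentially the same route as the paper. The paper simply combines the estimate $\cat_{\Poly}(M)\le\dim(M)$ from Theorem~\ref{thm:main:amcat} with the black-box statement of Theorem~\ref{thm:BS:vanishing:minent} (itself extracted from~\cite[Corollary~1.4 and Proposition~2.2]{bsfibre}); you unfold that black box by writing down the nerve map explicitly and applying the fibered vanishing criterion of Babenko--Sabourau directly, which is exactly the content of the cited Proposition~2.2 plus Corollary~1.4.
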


\subsection{Strategy of the proof}
Our proof is mainly topological and it uses the theory of injective Seifert fiberings by Lee and Raymond~\cite{leeraymond}: we first deduce an estimate for the (generelized) amenable category of injective Seifert fiber spaces whose typical fiber is amenable (Theorem~\ref{thm:amcat:estimate:SF}). Then via a classical result by Kamishima~\cite{kamishima}, we prove that all complete affine manifolds whose fundamental group contains an infinite amenable normal subgroup admit a structure of injective Seifert fiber spaces whose typical fiber is a torus of positive dimension.
The combination of the previous two results leads to Theorem~\ref{thm:intro:amcat:complete} (Theorem~\ref{thm:main:amcat}).

\subsection{Plan of the paper}
In Section~\ref{sec:sv} we recall the definitions of simplicial volume, stable integral simplicial volume and minimal volume. Moreover, we introduce the notion of amenable category (and its generalizations) and state all the vanishing theorems we need. Section~\ref{sec:affine} is devoted to the definition of affine manifolds and some examples. In Section~\ref{sec:SF} we recall the definition of injective Seifert fiber spaces and prove a local description of such manifolds (Proposition~\ref{prop:local:structure:SF}). Then in Section~\ref{sec:proof} we give the proof of Theorem~\ref{thm:intro:amcat:complete} (Theorem~\ref{thm:main:amcat}). The proofs of the vanishing results of the topological volumes are contained in Subsection~\ref{subsec:vanishing}. Finally, in Section~\ref{sec:concluding:rem} we provide an algebraic criterion for the existence of aspherical manifolds for which Question~\ref{quest:luck} has a positive answer.

\subsection{Acknowledgements}
We are grateful to Chris Connell for his interest in our project and useful conversations around complete affine manifolds. Moreover, we thank Giuseppe Bargagnati for his comments on the first draft of this paper.

M.\ M. was supported by the ERC ``Definable Algebraic Topology" DAT - Grant Agreement no.~101077154. 
This work has been funded by the European Union - NextGenerationEU under the National Recovery and Resilience Plan (PNRR) - Mission 4 Education and research - Component 2 From research to business - Investment 1.1 Notice Prin 2022 -  DD N.~104 del 2/2/2022, from title ``Geometry and topology of manifolds", proposal code 2022NMPLT8 - CUP J53D23003820001.


\section{(Stable integral) simplicial volume, minimal volume entropy and amenable covers}\label{sec:sv}

In this section we recall the definitions of (stable integral) simplicial volume, minimal volume entropy and (generalized) amenable category as well as some basic results.

\subsection{(Stable integral) simplicial volume} \emph{Simplicial volume} is a homotopy invariant of closed manifolds introduced by Gromov~\cite{vbc}. Let $M$ be an $n$-dimensional oriented closed connected manifold. Let $R$ be either $\R$ or $\Z$ and let $c = \sum_{i = 1}^{k} a_i \sigma_i \in C_n(M; R)$ be a singular chain. We define the $\ell^1$-\emph{norm} of $c$ to be $\sv{c}_1 = \sum_{i = 1}^k |a_i|$. Given a class in homology $\alpha \in H_n(M; R)$ we define the $\ell^1$-\emph{semi-norm} as
\[
\sv{\alpha}_1 := \inf \{ \sv{c}_1 \, | \, c \mbox{ is a cycle representing $\alpha$}\}.
\]

\begin{defi}[$R$-simplicial volume]
    The $R$-\emph{simplicial volume} of $M$ is defined as the $\ell^1$-seminorm of the $R$-fundamental class $[M]_R$ of $M$
    \[
    \sv{M}_R := \sv{[M]_R} \in [0, \infty).
    \]
\end{defi}

When $R = \R$ we will call the real simplicial volume simply \emph{simplicial volume} and denote it by $\sv{M}$. It is well known that the simplicial volume of oriented closed conneceted hyperbolic manifolds is positive~\cite{thurston, vbc} and it vanishes for many oriented closed connected manifolds including, e.g., manifolds admitting self-maps $f \colon M \to M$ of degree~$|\textup{deg}(f)|\geq 2$~\cite{vbc, frigerio}, manifolds with amenable fundamental group~\cite{vbc, frigerio} as well as all the manifolds listed below (Example~\ref{ex:IA}).
As already witnessed by the circle, the real and the integral simplicial volumes have different vanishing behaviour (since the integral simplicial volume is always strictly positive by definition~\cite{loeh:isv}). However, the situation becomes more interesting when we consider the following stabilization of the integral simplicial volume:

\begin{defi}[stable integral simplicial volume]
    Let $M$ be an oriented closed connected $n$-manifold. The \emph{stable integral simplicial volume} is defined as follows:
    \[
    \stisv{M} := \inf \Big\{ \frac{\sv{N}_{\Z}}{d} \Big| N \to M \mbox{ $d$-sheeted covering}\Big\} \in [0, +\infty) 
    \]
\end{defi}

A classical question is whether the vanishing of the simplicial volume implies the vanishing of the stable integral simplicial volume:

\begin{question}[integral approximation~{\cite[Question~6.2.2]{loehergodic}}]\label{quest:IA}
    Let $M$ be an oriented closed connected aspherical manifold with residually finite fundamental group such that $\sv{M} = 0$. Is it true that $\stisv{M} = 0$?
\end{question}

\begin{example}\label{ex:IA}
Question~\ref{quest:IA} is known to be true, e.g., in the following cases:
\begin{enumerate}
\item Aspherical manifolds with residually finite amenable fundamental groups;
\item Aspherical manifolds that are graph $3$-manifolds;
\item Smooth aspherical manifolds that admits a regular smooth circle foliation with finite holonomy groups and have residually finite fundamental group;
\item Smooth aspherical manifolds that admit a smooth $S^1$-action without fixed points and have residually finite fundamental group
\item Manifolds that admit an $F$-structure (of possibly zero rank) and have residually finite fundamental group.
\end{enumerate}
\end{example}

\subsection{Minimal volume entropy} \emph{Minimal volume entropy} is a homotopy invariant of closed smooth manifolds introduced by Gromov that describes the asymptotic geometry in terms of the minimal growth rate of balls~\cite{vbc}.

\begin{defi}[minimal volume entropy]
    Let $M$ be an oriented closed connected smooth manifold and let $g$ be a Riemannian metric on $M$. Then the \emph{volume entropy} of $(M, g)$ is defined as
    \[
    \textup{ent}(M, g) := \lim_{R \to +\infty} \frac{1}{R} \log \Big( \textup{vol}(B_R(\widetilde{x}))\Big),
    \]
    where $B_R(\widetilde{x})$ denotes a ball centered at $\widetilde{x}$ in the universal covering of $M$ and the volume is measured by taking the pullback metric $\widetilde{g}$. This definition does not depend on the chosen point~$\widetilde{x}$.

    The \emph{minimal volume entropy} of $M$ is defined as follows
    \[
    \textup{minent}(M) := \inf_{g \in \textup{Riem(M)}} \textup{ent}(M, g) \cdot \textup{vol}(M, g)^{\frac{1}{\dim(M)}} \in [0, +\infty).
    \]
\end{defi}

A classical result by Gromov~\cite[p. 37]{vbc} shows that for all oriented closed connected $n$-manifolds $M$ we have:
\[
c_{\dim(M)} \sv{M} \leq \textup{minent}(M)^{\dim(M)},
\]
where $c_{\dim(M)}$ is a positive constant. On the other hand, it is a widely open question whether the vanishing of the simplicial volume implies the vanishing of the minimal volume entropy~\cite{bsfibre}. This is indeed only known to be true in dimension $2$~\cite{katok1982entropy}, $3$~\cite{pieroni2019minimal} and $4$ provided that the $4$-manifold is geometrizable~\cite{suarez2009minimal}. Babenko and Sabourau have recently provided some new evidence showing that the minimal volume entropy seminorm and the simplicial volume are equivalent~\cite{babenko2023volume}.

\subsection{(Generalized) amenable covers}

One can generalize the classical notion of LS-category~\cite{CLOT} to subsets whose fundamental group have some given algebraic properties~\cite{GGH, CLM, lmfibre}. More precisely, let $\mathcal{G}$ be either the class of amenable groups or the one of finitely generated groups with polynomial growth (i.e.\ finitely generated virtually nilpotent groups by a celebrated result of Gromov~\cite{gromov1981groups}). We will denote the first class of groups by $\Am$ and the latter by $\Poly$.

\begin{defi}[$\G$-category]\label{defi:G-cat}
    Let $\G$ be as above and let $X$ be a topological space. Let $U \subset X$ be a (possibly disconnected) open subset. We say that $U$ is a $\G$-\emph{set} if 
    \[
    \im(\pi_1(U \hookrightarrow X, x)) < \pi_1(X, x) 
    \]
    is a group in $\G$ for every $x \in U$.

    We define the $\G$-\emph{category} of $X$, denoted by $\cat_{\G}(X)$, to be the minimal number~$n \in \N$ such that there exists an open cover $\calU$ of $X$ consisting of $\G$-sets and whose cardinality is $n$. We set $\cat_{\G}(X) = +\infty$ if there is no open cover of finite cardinality consisting of $\G$-sets.

    When $\G = \Am$ the $\Am$-category~$\amcat(X)$ is called \emph{amenable category} and $\Am$-sets are called \emph{amenable sets}.
    \end{defi}

    \begin{rem}
        Since the class $\Poly$ is smaller than $\Am$, we have the inequality $\amcat(X) \leq \cat_{\Poly}(X)$ for all topological spaces~$X$.
    \end{rem}

Gromov's Vanishing Theorem~\cite{vbc, CLM} shows that manifolds with ``small" amenable category have vanishing simplicial volume:

\begin{thm}[Vanishing Theorem for simplicial volume~{\cite[p. 40]{vbc}}]\label{thm:gromov:van:thm}
    Let $M$ be an oriented closed connected manifold such that $\amcat(M) \leq \dim(M)$. Then $\sv{M} = 0$.
\end{thm}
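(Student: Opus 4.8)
The plan is to derive the statement from the duality between bounded cohomology and $\ell^1$-homology together with Gromov's Mapping Theorem for bounded cohomology; this is Gromov's original argument, and I will only sketch it. Write $n = \dim(M)$, and use the hypothesis to fix an open amenable cover $\mathcal{U} = \{U_1, \dots, U_k\}$ of $M$ with $k \le n$. First I would record the duality reduction: since $M$ is oriented, closed and connected, $H^n(M;\R) \cong \R$ is generated by the class dual to $[M]$, and by the duality principle $\sv{M} = \sup\{\langle \phi, [M]\rangle : \phi \in H^n_b(M;\R),\ \|\phi\|_\infty \le 1\}$, where the pairing is induced by the comparison map $c^n \colon H^n_b(M;\R) \to H^n(M;\R)$. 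Hence $\sv{M} = 0$ if and only if $\langle c^n(\phi), [M]\rangle = 0$ for every bounded class $\phi$, that is, if and only if $c^n = 0$; so it suffices to prove that the comparison map in degree $n$ vanishes.

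Next I would bring in the cover. Choose a partition of unity $\{\rho_i\}_{i=1}^k$ subordinate to $\mathcal{U}$ and let $g \colon M \to |N(\mathcal{U})|$ be the induced map to the geometric realization of the nerve. Since $\mathcal{U}$ has at most $n$ members, every simplex of $N(\mathcal{U})$ has at most $n$ vertices, so $\dim |N(\mathcal{U})| \le n-1$ and therefore $H^n(|N(\mathcal{U})|;\R) = 0$. The heart of the proof is to show that $c^n$ factors through this group, i.e.\ that there is a linear map $\beta \colon H^n_b(M;\R) \to H^n(|N(\mathcal{U})|;\R)$ with $c^n = g^* \circ \beta$. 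To produce $\beta$ I would use the Mayer--Vietoris double complex associated with the pull-back of $\mathcal{U}$ to the universal cover: the stabilizers of the pieces are the images $\im(\pi_1(U_{i_0} \cap \dots \cap U_{i_p}) \to \pi_1(M))$, which are amenable since they are subgroups of the amenable groups $\im(\pi_1(U_i) \to \pi_1(M))$. By the Mapping Theorem -- equivalently, by the vanishing of bounded cohomology of amenable groups, obtained by averaging cochains against an invariant mean -- the local bounded-cohomology terms of the double complex vanish in positive degrees, so the double complex collapses onto its \v{C}ech direction, which is exactly the simplicial cochain complex of $N(\mathcal{U})$; chasing the fundamental cohomology class through this collapse yields $\beta$ together with the identity $c^n = g^* \circ \beta$.

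Putting the two steps together, $c^n = g^* \circ \beta$ factors through $H^n(|N(\mathcal{U})|;\R) = 0$, hence $c^n = 0$, and so $\sv{M} = 0$. (The same argument applies under the a priori weaker assumption that $M$ admits an open amenable cover of multiplicity at most $n$, which is automatic for a cover by at most $n$ sets.) The hard part is the middle step: bounded cohomology does not satisfy a naive Mayer--Vietoris principle, so making the collapse of the double complex rigorous requires one of the standard technical frameworks -- Gromov's multicomplexes or Ivanov's amenable/boundedly-acyclic resolutions -- and it is precisely there that the amenability hypothesis is consumed, killing the higher bounded cohomology of the pieces and leaving only the combinatorics of the nerve, which is too low-dimensional to detect the fundamental class.
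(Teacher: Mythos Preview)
The paper does not supply a proof of this theorem: it is quoted as Gromov's Vanishing Theorem with a reference to~\cite{vbc} and is used as a black box in Section~\ref{subsec:vanishing}. So there is no ``paper's own proof'' to compare against.

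Your sketch follows the standard route to Gromov's theorem and is broadly correct: reduce to showing that the comparison map $c^n\colon H^n_b(M;\R)\to H^n(M;\R)$ vanishes via the duality principle, then exploit that an amenable cover by $k\le n$ sets has nerve of dimension $\le n-1$, and use amenability (Mapping Theorem / bounded acyclicity of amenable groups) to conclude that bounded cohomology in degree $n$ cannot detect the fundamental class. You also correctly flag that the delicate point is the ``collapse of the double complex'', which in practice is handled by one of the heavy frameworks (Gromov's multicomplexes, Ivanov's relatively injective resolutions, or the sheaf-theoretic approach of Frigerio--Moraschini).

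One remark on the formulation: writing the conclusion as a literal factorisation $c^n = g^*\circ\beta$ through $H^n(|N(\mathcal U)|;\R)$ is a convenient heuristic, but it is not the statement one actually proves in any of the standard treatments. What one shows is either that every bounded $n$-cocycle is boundedly cohomologous to one vanishing on singular simplices subordinate to~$\mathcal U$ (so its pairing with a sufficiently subdivided fundamental cycle is zero), or that the relative bounded cohomology $H^n_b(M,\mathcal U;\R)\to H^n_b(M;\R)$ is surjective. Either way the nerve dimension enters as a bound on the \emph{multiplicity} of the cover rather than via a map $\beta$ landing in ordinary cohomology of the nerve. This does not affect the validity of your outline, but if you were to expand the sketch into a proof you would want to replace that factorisation by one of the precise statements above.
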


Recently some extensions of the previous theorem have been proved. Indeed, restricting to aspherical manifolds the existence of a small open cover also implies the vanishing of the stable integral simplicial volume:

\begin{thm}[Vanishing Theorem for stable integral simplicial volume~{\cite{LMS}}]\label{thm:van:thm:sisv}
Let $M$ be an oriented closed connected aspherical manifold with residually finite fundamental group such that $\amcat(M) \leq \dim(M)$. Then \[\stisv{M} = \sv{M} = 0.\]  
\end{thm}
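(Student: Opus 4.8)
The equality $\sv{M}=0$ is already a consequence of Gromov's Vanishing Theorem (Theorem~\ref{thm:gromov:van:thm}), and in any case follows from $\sv{M}\le\stisv{M}$, so the substance of the statement lies in the vanishing of the stable integral simplicial volume. The plan is to run a parametrized (foliated) version of Gromov's vanishing argument, in the spirit of \cite{LMS}. First I would replace $\stisv{M}$ by the integral foliated simplicial volume: for residually finite fundamental groups one has, for every $\pi_1(M)$-space $\Lambda$ with invariant probability measure, the comparison $\sv{M}\le\overline{|M|}^{\Lambda}\le\stisv{M}$, and choosing $\Lambda=\widehat{\pi_1(M)}$, the profinite completion with its normalized Haar measure, one has moreover $\overline{|M|}^{\widehat{\pi_1(M)}}=\stisv{M}$. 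Hence it suffices to prove $\overline{|M|}^{\Lambda}=0$ for this particular $\Lambda$.

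Next I would set up the combinatorial scaffolding. Fix an open amenable cover $\mathcal{U}=(U_i)_{i\in I}$ of $M$ of cardinality at most $n=\dim(M)$; its nerve $N=N(\mathcal{U})$ then has dimension at most $n-1$, a subordinate partition of unity gives a map $p\colon M\to N$, and --- since $M$ is aspherical, so $M\simeq B\pi_1(M)$ --- there is a map $q\colon N\to M$ realizing on fundamental groups the inclusion-induced homomorphisms $\pi_1(U_i)\to\pi_1(M)$, all of whose images lie in $\Am$. The core step is to show that the $\Lambda$-parametrized fundamental class of $M$ is ``squeezed'' onto the $(n-1)$-dimensional complex $N$, which forces its parametrized $\ell^1$-seminorm to vanish, because a closed $n$-manifold cannot carry a nonzero top-dimensional class supported on an $(n-1)$-complex. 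Concretely, one works throughout inside the parametrized chain complex $C_*(\widetilde{M};\Linf(\Lambda,\Z))$ with the diagonal $\Z\pi_1(M)$-action; the amenability of the groups $\im(\pi_1(U_i)\to\pi_1(M))$, together with a uniform boundary condition, provides parametrized fillings of controlled norm --- the parametrized counterpart of the vanishing of the bounded cohomology of amenable groups with dual coefficients that is used over $\R$ --- and these are glued over the nerve by a Mayer--Vietoris / diffusion-of-cycles scheme mirroring the classical proof of the Vanishing Theorem.

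The hard part --- and the reason residual finiteness and the profinite parameter space are indispensable --- is the integral averaging. Over $\R$ one just applies an invariant mean on each amenable quotient to replace a cycle by an invariant, hence effectively lower-dimensional, one; there is no invariant mean over $\Z$. The remedy is to exploit that the groups $\im(\pi_1(U_i)\to\pi_1(M))$ act on $\widehat{\pi_1(M)}$ with \emph{almost} invariant integer-valued functions --- F\o lner-type finite approximations indexed by ever deeper finite-index subgroups, that is, by ever larger finite covers of $M$ --- so that the averaging only holds up to an error term. The delicate point is to show, via the uniform boundary condition and a uniform (in the group and in the probability space) control on the $\ell^1$-homology of amenable actions, that these errors tend to $0$ along the tower of covers; propagating this quantitative estimate through the whole nerve induction is the technical heart of the proof. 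Once it is in place one gets $\overline{|M|}^{\widehat{\pi_1(M)}}=0$, and therefore $\stisv{M}=0$ and $\sv{M}=0$.
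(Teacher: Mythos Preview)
The paper does not contain a proof of this theorem at all: it is quoted verbatim from~\cite{LMS} and used as a black box (see the statement of Theorem~\ref{thm:van:thm:sisv} and its applications in Section~\ref{subsec:vanishing}). So there is nothing in the paper to compare your proposal against.

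That said, your outline is a faithful high-level summary of the argument that actually appears in~\cite{LMS}: passage to the integral foliated simplicial volume with parameter space the profinite completion~$\widehat{\pi_1(M)}$ (where it coincides with~$\stisv{M}$ for residually finite~$\pi_1(M)$), the nerve map into an~$(n{-}1)$-dimensional complex coming from an amenable cover of cardinality~$\le n$, and the parametrized diffusion/Mayer--Vietoris scheme powered by the uniform boundary condition together with F{\o}lner-style integral averaging over finite quotients. As a sketch this is correct in spirit; the genuine work, as you acknowledge, is the quantitative control of the error terms in the integral averaging propagated through the nerve induction, and you do not carry that out here. For the purposes of the present paper, however, none of this is needed: the theorem is simply cited.
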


In particular, all aspherical manifolds which admit a small amenable cover satisfy integral approximation (Question~\ref{quest:IA}). This leads to the following intriguing open question:

\begin{question}[{\cite[Question~1.3]{LMS}}]
    Does there exist an oriented closed connected aspherical manifold $M$ such that $\sv{M} = 0$ but $\amcat(M) = \dim(M)+1$?
\end{question}

Finally, if we restrict to $\cat_{\Poly}(M)$ Babenko and Sabourau proved the following (compare with~\cite[Example~5.10]{lmfibre}):

\begin{thm}[Vanishing Theorem for minimal volume entropy~{\cite[Corollary~1.4 and Proposition~2.2]{bsfibre}}]\label{thm:BS:vanishing:minent}
    Let $M$ be an oriented closed connected smooth manifold such that $\cat_{\Poly}(M) \leq \dim(M)$. Then $\textup{minent}(M) = 0$.
\end{thm}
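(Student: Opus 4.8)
The plan is to use the nerve of a small $\Poly$-cover to present $M$ as the source of a map onto a complex of dimension strictly less than $\dim(M)$ whose fibers have polynomial growth, and then to kill the volume entropy by collapsing the Riemannian metric along those fibers. Set $n := \dim(M)$ and fix an open cover $\{U_1,\dots,U_n\}$ realizing $\cat_{\Poly}(M)\le n$. Its nerve $N$ is a finite simplicial complex with $\dim(N)\le n-1$, and a partition of unity subordinate to the cover gives a canonical continuous map $f\colon M\to N$, which — after refining the cover or passing to a simplicial approximation — we may take to be simplicial for suitable triangulations of $M$ and $N$. The key point of this reduction is that if $b\in N$ lies in the open simplex spanned by $\{i_0,\dots,i_k\}$, then $f^{-1}(b)\subseteq U_{i_0}$, so any loop contained in a fiber of $f$ represents in $\pi_1(M)$ an element of $\im(\pi_1(U_{i_0})\to\pi_1(M))$, a group of polynomial growth; by Gromov's polynomial growth theorem \cite{gromov1981groups} these images are virtually nilpotent, hence of subexponential word growth. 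Moreover, since $\dim(N)<n$, after triangulating we may assume the fibers of $f$ are positive-dimensional over an open dense subset of $N$.

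Next, for each $\varepsilon\in(0,1]$ I would construct a smooth Riemannian metric $g_\varepsilon$ on $M$ that agrees, transversally to the fibers of $f$, with a fixed metric $g_N$ pulled back from a thickening of $N$, while the fiber directions are rescaled by $\varepsilon^2$. Over the top stratum of $N$ the map $f$ is a fiber bundle and this is the obvious local construction ($g_N\oplus\varepsilon^2 g_0$ in coordinates where $f$ is a linear projection); over the lower skeleta of $N$ the fibers jump in dimension, so the metrics are produced by an induction over the skeleta of $N$, rescaling the larger fibers by correspondingly higher powers of $\varepsilon$ and interpolating with a partition of unity on overlaps (and smoothing). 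Since the generic fiber has positive dimension, lengths in the fiber directions scale like $\varepsilon$, and one obtains $\textup{vol}(M,g_\varepsilon)\le C\varepsilon$ for a constant $C=C(M,f,g_N)$, whereas the ``horizontal'' geometry is $\varepsilon$-independent.

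It remains to bound $\textup{ent}(M,g_\varepsilon)$ independently of $\varepsilon$. Lifting to $\widetilde{M}$, a path of length $R$ decomposes into a horizontal part, whose projection travels distance $\le R$ in a fixed cover of $N$ with the fixed metric $g_N$ and therefore stays in a region of volume $\le e^{\Lambda R}$ with $\Lambda=\Lambda(N,g_N)$, and a vertical part moving inside a single fiber; since that fiber's $\pi_1$-image has polynomial growth and the fiber metric has been shrunk by $\varepsilon$, the vertical part can reach only polynomially-in-$R$ many translates of a fundamental domain. Hence $\textup{vol}(B_R(\widetilde x))\le \mathrm{poly}(R)\,e^{\Lambda R}$, so $\textup{ent}(M,g_\varepsilon)\le\Lambda$ for every $\varepsilon$. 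Combining this with the volume estimate,
\[
0\ \le\ \textup{minent}(M)\ \le\ \textup{ent}(M,g_\varepsilon)\cdot\textup{vol}(M,g_\varepsilon)^{1/n}\ \le\ \Lambda\,(C\varepsilon)^{1/n},
\]
and letting $\varepsilon\to 0$ yields $\textup{minent}(M)=0$.

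The hard part is the singular nature of the map $f$: since $N$ may have dimension as large as $n-1$ and the fibers of $f$ are neither smooth submanifolds nor of constant dimension, the fiberwise collapse must be carried out by a delicate skeleton-by-skeleton construction with compatible scalings, and the subexponential bound on the vertical ball growth must be extracted from polynomial growth of the $\pi_1$-\emph{images} in $\pi_1(M)$ rather than of honest group quotients — this comparison between the intrinsic geometry of the collapsed fibers and word metrics on virtually nilpotent groups is the technical core of the Babenko--Sabourau argument, and is exactly where Gromov's polynomial growth theorem is genuinely used.
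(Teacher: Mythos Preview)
The paper does not prove this theorem: it is quoted verbatim as a result of Babenko--Sabourau~\cite[Corollary~1.4 and Proposition~2.2]{bsfibre} and used as a black box (specifically, combined with the estimate $\cat_{\Poly}(M)\le\dim(M)$ from Theorem~\ref{thm:main:amcat} to derive Corollary~\ref{cor:minent:van}). So there is nothing in the paper to compare your argument against beyond the citation itself.

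That said, what you have written is a faithful high-level outline of the Babenko--Sabourau strategy: pass from a $\Poly$-cover of cardinality $\le n$ to a simplicial map onto the nerve $N$ with $\dim N\le n-1$, observe that fibers sit inside cover elements and hence have $\pi_1$-image of polynomial growth, and then run a fiberwise collapse $g_\varepsilon$ so that $\textup{vol}(M,g_\varepsilon)\to 0$ while $\textup{ent}(M,g_\varepsilon)$ stays bounded. Your final paragraph is honest about where the real work lies --- the stratum-by-stratum construction of $g_\varepsilon$ over a non-manifold base with jumping fiber dimension, and the translation from polynomial growth of $\pi_1$-images to a uniform subexponential bound on vertical ball volumes in the collapsed metric. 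One small point worth sharpening: after scaling, a vertical path of $g_\varepsilon$-length $R$ has original length $R/\varepsilon$, so the vertical count is polynomial in $R/\varepsilon$ rather than in $R$; this still contributes $0$ to $\limsup_{R\to\infty}\tfrac{1}{R}\log(\cdot)$ for each fixed $\varepsilon$, which is what you need, but the phrase ``polynomially-in-$R$ many translates'' slightly understates the dependence. None of this affects the conclusion, and your sketch goes well beyond what the present paper attempts.
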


\section{Affine manifolds}\label{sec:affine}

In this section we briefly recall some basic definitions around the theory of (complete) affine manifolds. We refer the reader to the nice book by Goldman~\cite{goldman2022geometric} for a thorough discussion.

We recall the definition of $(G,X)$\emph{-structure} on a smooth manifold.
\begin{defi}[$(G,X)$-structures and $(G,X)$-manifolds]
    Let $X$ be a smooth $n$-manifold and let $G$ be a group of diffeomorphisms of $X$ such that two diffeomorphisms $g_1, g_2 \in G$ are equal on some open subset of $X$ if and only if $g_1 = g_2$. Let $M$ be a smooth $n$-manifold. A $(G,X)$\emph{-structure} on $M$ is the datum of an open atlas $\{(U_i,\phi_i)\}_{i\in I}$ such that
    \begin{itemize}
        \item For every $i \in I$ the chart $\phi_i\colon U_i\to X$ is a smooth embedding;
        \item For every $(i, j) \in I \times I$ the composition $\phi_j\circ \phi_i^{-1}\colon \phi_i(U_i\cap U_j)\to \phi_j(U_i\cap U_j)$ coincides on each connected component of $\phi_i(U_i\cap U_j)$ with a (necessarily unique) diffeomorphism $g \in G$.
    \end{itemize}

    A $(G,X)$\emph{-manifold} is a smooth manifold that supports a $(G,X)$-structure.
\end{defi}
\begin{example}
    Many geometric properties can be expressed in the language of $(G,X)$-structures, for example:
    \begin{enumerate}
        \item $M$ is a hyperbolic manifold $\Leftrightarrow$ $M$ is a $(\Isom(\Hyp^n),\Hyp^n)$-manifold;
        \item $M$ is a flat manifold $\Leftrightarrow$ $M$ is a   $(\Isom(\R^n), \R^n)$-manifold.
     \end{enumerate}
\end{example}
A $(G,X)$-structure on a manifold $M$ is completely described by a group homomorphism $\rho\colon \pi_1(M)\to G$, called the \emph{holonomy representation}, and a $\rho$-equivariant local diffeomorphism $D\colon \widetilde{M}\to X$, called the \emph{developing map}. The holonomy representation (resp. the developing map) is unique up to conjugation by (resp. composition with)  an element of $G$ \cite{goldman2022geometric}. A $(G,X)$-structure (resp. a $(G,X)$-manifold) is \emph{complete} if the developing map is a covering map.

\begin{defi}[(complete) affine manifolds]
Let $\Aff(\R^n)\cong \R^n\rtimes \GL(n,\R)$ be the group of affine transformations of $\R^n$.
An \emph{affine manifold} is a manifold that support a $(\Aff(\R^n),\R^n)$-structure. 

Since $\R^n$ is simply connected, an affine manifold $M$ is \emph{complete} if and only if the developing map $D\colon \widetilde{M}\to \R^n$ is a diffeomorphism. 
\end{defi}
Clearly, the holonomy representation $\rho$ of a complete affine manifold is injective~\cite[p.~184]{goldman2022geometric}. Moreover, an affine manifold $M$ is complete if and only if it is diffeomorphic to a quotient of $\R^n$ by a discrete subgroup of $\Aff(\R^n)$ acting on $\R^n$ freely and properly~\cite[Section~8.1.2]{goldman2022geometric}.

Quite surprisingly, already in dimension $2$ we can find complete affine manifolds with abelian fundamental group whose \emph{holonomy} (i.e.\ the image of the holonomy representation) does \emph{not} contain any non-trivial pure translation. This example is due to Kuiper~\cite{kuiper1953surfaces} (see also Goldman's book~\cite[pp.~190--192]{goldman2022geometric}):

\begin{example}\label{ex:kuiper} For every $(s,t)\in \R^2$, we define the affine transformation
\[\phi_{s,t} \colon \R^2 \to \R^2\]
as follows
\[ \phi_{s,t}(x,y)= \left(x+ty+s+\frac{t^2}{2}, y+t\right).\]
    The resulting map
    \begin{align*}
        \Phi\colon \R^2&\to \textup{Aff}(\R^2) \\
        (s, t) &\mapsto \phi_{s, t}
    \end{align*}
is a group homomorphism and $\Phi(\R^2)$ acts simply transitively on $\R^2$. For every lattice $\Lambda < \R^2$ the quotient $\R^2/\Phi(\Lambda)$ is diffeomorphic to the $2$-torus $\mathbb{T}^2$. This defines a complete affine structure on $\mathbb{T}^2$ with holonomy $\Phi(\Lambda)$. By construction $\phi_{s,t}$ is a pure translation if and only if $t=0$. Hence the desired example can be obtained by choosing a lattice $\Lambda$ such that \[\Lambda \cap \big(\R\times\{0\}\big)=\{(0,0)\}.\]
\end{example}
Fundamental groups of \emph{closed} complete affine manifolds (or more generally finitely generated groups~$\Gamma \subset \Aff(\R^n)$ acting properly and cocompactly on $\R^n$) are \emph{indecomposable}~\cite[Lemma 1.2]{kamishima}, i.e.\ it is not possible to write all the elements of such groups in the fixed form
\begin{equation*}
    \begin{pmatrix}
        A_1 & 0& 0\\
        0 & A_2 & b\\
        0 & 0 & 1\\
    \end{pmatrix}
\end{equation*}
when viewed as elements of $\GL(n+1,\R).$
\section{Injective Seifert fiberings}\label{sec:SF}

In this section we recall the definitions of \emph{injective Seifert fiberings} and \emph{injective Seifert fiber spaces} due to Lee and Raymond~\cite{leeraymond}.
Examples of such spaces include, e.g., compact nilmanifolds as well as certain complete affine manifolds (Example~\ref{ex:inj:SF}).
Moreover, we also prove a useful local description of injective Seifert fiber spaces (Proposition~\ref{prop:local:structure:SF}).

\begin{setup}\label{setup:SF}
We consider the following setup:
\begin{itemize}
    \item Let $G$ be a connected Lie group;
    \item Let $W$ be a contractible manifold;
    \item Suppose that $G$ acts on $G\times W$ via left-multiplication on the first factor and trivially on the second factor;
    \item Let $\ell\colon G\to \textup{Homeo}(G\times W)$ be the group homomorphism associated to $G \actson G \times W$;
    \item Let $\pi\subset \textup{Homeo}(G\times W)$ be a \emph{discrete} group acting freely and properly on $G\times W$.
\end{itemize}

Moreover, suppose that the following three conditions hold:
\begin{enumerate}
    \item $\ell(G)$ is normalized by $\pi$ in $\textup{Homeo}(G\times W)$;
    \item $\Gamma:= \pi \cap \ell(G)$ is normal in $\pi$ and $\ell^{-1}(\Gamma)$ is discrete in $G$;
    \item The induced action of the discrete group $Q:=\pi/\Gamma$ on $W$ is proper.
\end{enumerate}
\end{setup}
In the situation of Setup~\ref{setup:SF} the projection map $p_W \colon G\times W\to W$ is $\pi$-equivariant, where $\pi$ acts on $W$ via $Q$. Hence $p_W$ induces a continuous map
\[ p\colon (G\times W)/\pi \to W/Q.\]

\begin{defi}[injective Seifert fibering]
  In the situation of Setup~\ref{setup:SF} we say that the map $p$ is an \emph{injective Seifert fibering} modeled on the product bundle $G\times W \to W$. Moreover, $(G\times W)/\pi$ is called \emph{injective Seifert fiber space}, $B:= W/Q$ is the \emph{base} of the injective Seifert fibering $p$ and the manifold $G \slash \Gamma$ is the \emph{typical fiber} of $p$.
\end{defi}

The definition of injective Seifert fibering gives the following commutative diagram
\begin{center}
\begin{tikzcd}
G\times W \arrow[d, "/\Gamma" '] \arrow[rr, "p_W"]     &  & W \arrow[d,  "\textup{Id}_W"] &  & \text{principal $G$-bundle}       \\
G/\Gamma\times W \arrow[ d, "/Q"'] \arrow[rr, "q_W"] &  & W \arrow[ d, "/Q" ']                &  & \text{intermediate fiber bundle}  \\
(G\times W)/\pi \arrow[rr, "p"]                                    &  & B                                &  & \text{injective Seifert fibering}

\end{tikzcd}
\end{center}
where $p_W$ and $q_W$ are the projections onto the second factors.
The commutativity of the lower square is a consequence of the $Q$-equivariance of the projection map $q_W$. Since for convenience we have required in Setup~\ref{setup:SF} that the action $\pi \actson G \times W$ is free, the resulting quotient $(G\times W)/\pi$ is in fact a \emph{manifold}. For this reason we will denote it by $M := (G\times W)/\pi$. Moreover, the two vertical maps on the left in the diagram above are \emph{regular coverings}. 
\begin{defi}[smooth injective Seifert fibering]
    In the situation of Setup~\ref{setup:SF} we say that an injective Seifert fibering is \emph{smooth} if $W$ is a smooth manifold and $\pi$ acts smoothly on $G\times W$. 
\end{defi}

In the case of smooth injective Seifert fiberings the injective Seifert fiber space is a smooth manifold. Moreover, the induced action of $Q$ on $W$ is also smooth, hence the base space $B$ is a smooth orbifold~\cite[Prop. 1.5.1]{caramello}. 

\begin{example}\label{ex:inj:SF}
We collect here some examples of injective Seifert fiberings:
\begin{enumerate}
    \item Let $\Gamma$ be a discrete subgroup of $G$, then the constant map $G/\Gamma \to \{\ast\}$ is an injective Seifert fibering modeled on the product bundle $G\times \{\ast\} \to \{\ast\}$. 
    \item Let $G$ be a simply connected nilpotent Lie group and let $C$ be a maximal compact subgroup of the group of Lie automorphisms~$\textup{Aut}(G)$. An \emph{almost-Bieberbach group}~$\pi$ modeled on $G$ is a torsion-free discrete cocompact subgroup of $G \rtimes C$. The Generalized First Bieberbach Theorem~\cite[Theorem~1]{auslandernilpotent} states that the group of pure translations $\Gamma=\pi\cap G\trianglelefteq \pi$ is a lattice of $G$ and $Q=\pi/\Gamma$ finite. Then the constant map $G \slash \pi \to \{\ast\}$ is an injective Seifert fibering modeled on the product bundle $G \times \{\ast\} \to \{\ast\}$. In particular, the injective Seifert fiber space~$G/\pi$ is an \emph{infra-nilmanifold}.
    \item Let $\mathbb{T}^k$ denote the $k$-dimensional torus. Let $M$ be a connected $n$-manifold that admits an effective \emph{injective} $\mathbb{T}^k$-action for some $k<n$. Recall that a torus action is  \emph{injective} if for every $x\in M$ the evaluation map $\textup{ev}_x\colon \mathbb{T}^k\to M$ defined as $\textup{ev}_x(t)=t\cdot x$ induces an injective homomorphism between the fundamental groups.
    For instance every effective torus action on closed and aspherical manifolds is injective~\cite[Corollary~3.1.12]{leeraymond}. In the case of injective torus action, one can show that the image $\textup{ev}_x^*(\pi_1(\mathbb{T}^k))\cong \Z^k$ is a central subgroup of $\pi_1(M)$~\cite[Theorem 2.4.2]{leeraymond}. Moreover, the universal covering $\widetilde{M}$ is homeomorphic to $\R^k \times W$ for some simply connected manifold $W$, the group~$\textup{ev}_x^*(\pi_1(\mathbb{T}^k))\cong \Z^k$  acts on  $\R^k \times W$ by translations on the first factor and trivially on the second factor, and the quotient group~$Q=\pi_1(M)/\textup{ev}_x^*(\pi_1(\mathbb{T}^k))$ acts properly on $W$~\cite[Section 4.5.3]{leeraymond}. This shows that the induced map $M\to W/Q$ is an injective Seifert fibering modeled on product bundle $\R^k\times W$.
   \item Let $M$ be a complete affine manifold with indecomposable fundamental group, such that $\pi_1(M)$ admits an infinite abelian normal subgroup. Then $M$ is a smooth injective Seifert fiber space with typical fiber  $\mathbb{T}^k$ for some $k>0$~\cite[Proposition~1.3]{kamishima}.
\end{enumerate}
    
\end{example}
The examples above showcase that injective Seifert fiberings aim to generalize the splitting property of the universal covering of injective torus actions: when $\pi$  \emph{centralizes} $\ell(G)$, the $G$-action on $G\times W$ still induces a $G/\Gamma$-action on $X$. On the other hand, injective Seifert fiberings also extend the classical notion of Seifert fibered $3$-manifold, as witnessed by the following proposition.

\medskip

\begin{prop}[local structure of injective Seifert fiberings]~\label{prop:local:structure:SF}
    In the situation of Setup~\ref{setup:SF}, let $p \colon M \to B$ be a smooth injective Seifert fibering modeled on the product bundle $G \times W \to W$ with typical fiber~$G \slash \Gamma$. Then for every $b \in B$ there exists an open neighbourhood $V_b$ of $b$ such that $p^{-1}(V_b)$ admits a regular covering of finite degree diffeomorphic to $G/\Gamma\times\R^{\dim(W)}$.
\end{prop}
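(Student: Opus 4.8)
The plan is to work locally downstairs on the orbifold base $B = W/Q$ and pull back the resulting structure through the injective Seifert fibering $p$. First I would recall that the $Q$-action on the contractible manifold $W$ is proper and smooth, so for every $b \in B$ we may choose a point $w \in W$ lying over $b$ together with a slice for the $Q$-action at $w$; the isotropy group $Q_w$ is finite (properness plus discreteness of $Q$), and by the slice theorem there is a $Q_w$-invariant open neighbourhood $S$ of $w$ in $W$, equivariantly diffeomorphic to $Q_w \times_{?} \R^{\dim W}$ — more precisely, shrinking further, $S$ is $Q_w$-diffeomorphic to a linear $Q_w$-representation on $\R^{\dim W}$, and the saturation $Q \cdot S$ is $Q$-equivariantly diffeomorphic to $Q \times_{Q_w} \R^{\dim W}$. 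Setting $V_b := (Q\cdot S)/Q = S/Q_w$ gives the desired open neighbourhood of $b$ in $B$.

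Next I would identify $p^{-1}(V_b)$. By construction $p^{-1}(V_b) = (G \times (Q\cdot S))/\pi$. Since $Q\cdot S \cong Q\times_{Q_w} \R^{\dim W}$ as $Q$-spaces, one checks that the $\pi$-action on $G \times (Q\cdot S)$ is induced up from the action of the finite-index subgroup $\pi_w := $ (the preimage in $\pi$ of $Q_w$ under $\pi \to Q$) on $G \times S$; concretely $(G\times(Q\cdot S))/\pi \cong (G\times S)/\pi_w$. This is the step where the commutative diagram of Setup~\ref{setup:SF} gets used: $\Gamma = \pi\cap\ell(G)$ is still normal in $\pi_w$ with $\pi_w/\Gamma \cong Q_w$ finite, so the covering $(G\times S)/\Gamma \to (G\times S)/\pi_w$ is regular of degree $|Q_w| = [\pi:\pi_w]$, which will be the finite degree in the statement. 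It remains to see that $(G\times S)/\Gamma$ is diffeomorphic to $G/\Gamma \times \R^{\dim W}$: the $\Gamma$-action comes from $\ell|_{\ell^{-1}(\Gamma)}$ acting by left translation on the $G$-factor and trivially on $S$ (this is exactly the content of $\Gamma \subset \ell(G)$), so the quotient splits as $(G/\Gamma)\times S$, and $S \cong \R^{\dim W}$ by the slice choice.

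Finally I would assemble the pieces: $p^{-1}(V_b) \cong (G\times S)/\pi_w$ admits the regular finite-degree covering $(G/\Gamma)\times S \cong G/\Gamma \times \R^{\dim W}$, which is precisely the assertion. Smoothness is automatic throughout because the fibering is assumed smooth, so $W$ is smooth, $\pi$ acts smoothly, and the slice theorem for proper smooth actions of the (discrete, hence $0$-dimensional Lie) group $Q$ produces smooth equivariant charts.

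The main obstacle I expect is the bookkeeping in the second step — verifying carefully that the $\pi$-action on $G\times(Q\cdot S)$ is induced from $\pi_w \curvearrowright G\times S$ and that passing to the quotient is compatible with the $\Gamma$-quotient, i.e.\ that one really gets a \emph{regular} covering of the stated degree rather than merely a branched or non-normal one. This hinges on $\Gamma$ being normal not just in $\pi$ but in $\pi_w$ (immediate from $\Gamma \trianglelefteq \pi$) and on the identification $\pi_w/\Gamma \cong Q_w$ acting freely on $(G/\Gamma)\times S$ — freeness follows since $\pi$ (hence $\pi_w$) acts freely on $G\times W$ and the $\Gamma$-quotient does not introduce new fixed points because $\Gamma$ acts only on the $G$-factor. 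Everything else (the slice theorem, contractibility of $W$ ensuring $S\cong\R^{\dim W}$ after shrinking, finiteness of $Q_w$) is standard.
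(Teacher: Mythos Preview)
Your proof is correct and follows essentially the same route as the paper: both arguments apply the slice theorem to the proper smooth $Q$-action on $W$ to produce a $Q_w$-invariant slice $S\cong\R^{\dim W}$, and then verify that the quotient map $(G/\Gamma)\times S \to p^{-1}(V_b)$ is a regular covering with deck group $Q_w$. The only cosmetic difference is that the paper carries out the bookkeeping at the intermediate level $G/\Gamma\times W$ (restricting the covering $p_M$ to one component of $q_W^{-1}(p_B^{-1}(V_b))$), whereas you stay upstairs on $G\times W$ and name the subgroup $\pi_w$ explicitly; the two computations unwind to the same thing.

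Two small corrections: the degree of your covering is $|Q_w|=[\pi_w:\Gamma]$, not $[\pi:\pi_w]$ (the latter equals $[Q:Q_w]$, which is the number of $Q$-translates of the slice and is typically infinite); and contractibility of $W$ plays no role in obtaining $S\cong\R^{\dim W}$ --- that follows from the exponential map in a $Q_w$-invariant metric, exactly as in the paper's Slice Theorem.
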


A fundamental ingredient in the proof of Proposition~\ref{prop:local:structure:SF} is the following version of the Slice Theorem for discrete groups that we reprove for convenience of the reader:

\begin{thm}[Slice Theorem for discrete groups]\label{thm:slice}
    Let $\Gamma$ be a discrete group acting properly and smoothly on a smooth manifold $M$. Let $x \in M$ be a point and let $\Gamma_x$ denote the stabilizer of $\Gamma$ at $x$. Then 
    \begin{enumerate}
        \item There exists a $\Gamma_x$-invariant open neighbourhood~$V$ of $x$ that is diffeomorphic to $\R^{\dim(M)}$ and has the following two additional properties: $\{g \in \Gamma \, | \, gV \cap V \neq \emptyset \} = \Gamma_x$ and $g V \cap g' V \neq \emptyset$ if and only if $g$ and $g'$ belong to the same $\Gamma_x$-coset;

        \item The open set $U := \Gamma \cdot V$ is a $\Gamma$-invariant open neighbourhood of $\Gamma \cdot x$ diffeomorphic to $\Gamma \slash \Gamma_x \times \R^{\dim(M)}$.
    \end{enumerate}
\end{thm}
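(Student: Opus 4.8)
The plan is to build the slice $V$ in two stages: first shrink a neighbourhood of $x$ by a soft properness-plus-Hausdorff argument so that the two disjointness conditions hold, and only then pass to a metric ball on which the finite isotropy group acts linearly; part~(2) then follows by a formal gluing argument.

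As a preliminary, note that since the action is proper (i.e.\ $\{g\in\Gamma\mid gK\cap K\neq\emptyset\}$ is finite for every compact $K\subseteq M$) and $\Gamma$ is discrete, the stabiliser $\Gamma_x=\{g\in\Gamma\mid g\cdot x=x\}$ is finite, by applying properness to $K=\{x\}$. I would then fix an arbitrary Riemannian metric on $M$ and replace it by its average over the finite group $\Gamma_x$; with respect to the resulting $\Gamma_x$-invariant metric every $h\in\Gamma_x$ is an isometry fixing $x$, so $dh_x$ is an orthogonal transformation of $T_xM$ and $h$ commutes with the exponential map $\exp_x$ wherever the latter is defined.

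To construct $V$, I would pick an open neighbourhood $N$ of $x$ with compact closure $\overline N$. By properness the set $F:=\{g\in\Gamma\mid g\overline N\cap\overline N\neq\emptyset\}$ is finite and contains $\Gamma_x$; for each $g\in F\setminus\Gamma_x$ we have $g\cdot x\neq x$, so by Hausdorffness there is an open $W_g\ni x$ with $gW_g\cap W_g=\emptyset$. Setting $V':=N\cap\bigcap_{g\in F\setminus\Gamma_x}W_g$ and $V_1:=\bigcap_{h\in\Gamma_x}hV'$, one checks readily that $V_1$ is a $\Gamma_x$-invariant open neighbourhood of $x$ with $\{g\in\Gamma\mid gV_1\cap V_1\neq\emptyset\}=\Gamma_x$. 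I would then choose $\varepsilon>0$ so small that $\exp_x$ restricts to a diffeomorphism of the open $\varepsilon$-ball $B_\varepsilon(0)\subseteq T_xM$ onto an open set $V:=\exp_x(B_\varepsilon(0))\subseteq V_1$. Since the isotropy action on $T_xM$ is orthogonal it preserves $B_\varepsilon(0)$, and since $\Gamma_x$ commutes with $\exp_x$ the set $V$ is $\Gamma_x$-invariant; moreover $V\cong B_\varepsilon(0)\cong\R^{\dim(M)}$, and $V\subseteq V_1$ forces $\{g\mid gV\cap V\neq\emptyset\}=\Gamma_x$. The last assertion of~(1) is then immediate, since $gV\cap g'V\neq\emptyset\iff ((g')^{-1}g)V\cap V\neq\emptyset\iff (g')^{-1}g\in\Gamma_x\iff g\Gamma_x=g'\Gamma_x$.

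For~(2), pick a set $S\subseteq\Gamma$ of coset representatives for $\Gamma/\Gamma_x$. Part~(1) gives $U=\Gamma\cdot V=\bigsqcup_{s\in S}sV$, which is a $\Gamma$-invariant open set containing $\Gamma\cdot x$; the bijection $S\times V\to U$, $(s,v)\mapsto s\cdot v$, is a diffeomorphism on each sheet $\{s\}\times V$, hence a diffeomorphism, and composing with $S\cong\Gamma/\Gamma_x$ (discrete) and $V\cong\R^{\dim(M)}$ yields $U\cong\Gamma/\Gamma_x\times\R^{\dim(M)}$ (equivalently, $U$ is the twisted product $\Gamma\times_{\Gamma_x}V$, which over the discrete base $\Gamma/\Gamma_x$ is a product). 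Everything here is soft point-set topology except the passage to the ball: the one genuine point is the linearisation of the finite group action of $\Gamma_x$ near its fixed point $x$ --- a baby case of Bochner's linearisation theorem --- which the averaged-metric/exponential-map argument above takes care of. I expect this to be the crux, and it is essential to perform the shrinking producing $V_1$ \emph{before} passing to the ball, since an arbitrary $\Gamma_x$-invariant open neighbourhood of $x$ need not be diffeomorphic to $\R^{\dim(M)}$.
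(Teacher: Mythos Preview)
Your proof is correct and follows essentially the same route as the paper: first produce a $\Gamma_x$-invariant slice neighbourhood with the two disjointness properties, then shrink to an exponential ball for a $\Gamma_x$-averaged metric, and finally observe that $U=\Gamma\cdot V$ decomposes as a disjoint union indexed by $\Gamma/\Gamma_x$. The only cosmetic difference is that where the paper invokes a reference (Lee--Raymond) for the existence of the initial slice $S$, you supply the standard properness-plus-Hausdorff argument yourself.
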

\begin{proof}
\emph{Ad~1)} Since $\Gamma$ acts properly on $M$, it is well known that each $x\in M$ admits a $\Gamma_x$-invariant open neighbourhood $S$ such that $\{g \in \Gamma \, | \, g S \cap S \neq \emptyset \} = \Gamma_x$ and $g S \cap g' S \neq \emptyset$ if and only if $g$ and $g'$ belong to the same $\Gamma_x$-coset \cite[Proposition~1.7.4]{leeraymond}. We claim that there exists  an open neighbourhood $V$ with the same properties that is also diffeomorphic to $\R^{\dim(M)}$.
     Indeed, since $\Gamma_x$ is finite we can choose a $\Gamma_x$-invariant Riemannian metric on $M$ and then consider the exponential map $\textup{exp} \colon T_xM \to M$ with respect to that metric. By construction $\textup{exp}$ is $\Gamma_x$-equivariant and its differential at the origin is invertible. Hence it provides a diffeomorphism between a small open ball $B_\varepsilon(0) \subset T_xM$ and its image $\textup{exp}(B_\varepsilon(0))$. Up to taking a smaller $\varepsilon >0$, we can also assume that $\textup{exp}(B_\varepsilon(0))\subset S$. The set~$V := \textup{exp}(B_\varepsilon(0))$ provides the desired open neighbourhood of $x$. Indeed, since $\exp$ is $\Gamma_x$-equivariant and the chosen Riemannian metric on $M$ is $\Gamma_x$-invariant, the set $V$ is $\Gamma_x$-invariant. Hence the inclusions $\{x\} \subset V\subset S$ readily imply that $\{g \in \Gamma \, | \, g V \cap V \neq \emptyset \} = \Gamma_x$ and that $g V \cap g' V \neq \emptyset$ if and only if $g$ and $g'$ belong to the same $\Gamma_x$-coset. In fact, the $\Gamma_x$-invariance of $V$ shows more: $g V \cap g' V \neq \emptyset$ if and only if $g V = g' V$.

     \emph{Ad~2)} Consider $U:=\Gamma \cdot V$. By construction $U$ is a $\Gamma$-invariant open neighbourhood of the orbit $\Gamma\cdot x$. Moreover, since $g V \cap g' V \neq \emptyset$ if and only if $g$ and $g'$ belong to the same $\Gamma_x$-coset, we have that $U$ is the disjoint union of copies of $V$ indexed by $\Gamma/\Gamma_x$. In other words, we have shown that
     \[ U\cong \Gamma/\Gamma_x\times V \cong \Gamma/\Gamma_x\times \R^{\dim(M)}\] where $\Gamma/\Gamma_x$ is endowed with the discrete topology.
\end{proof} 

We are now ready to prove Proposition~\ref{prop:local:structure:SF}:

\begin{proof}[Proof of Proposition~\ref{prop:local:structure:SF}]
Consider the following commutative diagram:
\begin{center}
\begin{tikzcd}
G/\Gamma\times W \arrow[ d, "p_M", "/Q"'] \arrow[rr, "q_W"] &  & W \arrow[ d, "p_B", "/Q" ']        \\
M \arrow[rr, "p"]                                    &  & B,                              
\end{tikzcd}
\end{center}
where $q_W$ is the projection onto the second factor. Fix a point $b\in B$ and choose a point $w\in p_B^{-1}(b)\subset W$.
According to Setup~\ref{setup:SF} the discrete group $Q:=\pi/\Gamma$ acts properly on $W$. Let $Q_w$ denote the \emph{finite} stabilizer of the point $w$.
    By Theorem~\ref{thm:slice}.(1) there exists a $Q_w$-invariant open neighbourhood~$U_w$ of $w$ diffeomorphic to $\R^{\dim(W)}$ and by Theorem~\ref{thm:slice}.(2) a $Q$-invariant open neighbourhood~$U:= Q \cdot U_w$ of the orbit $Q \cdot w$ diffeomorphic to $Q/Q_w \times \R^{\dim(W)}$. 
    Then the set $V_b := p_B(U_w)$ is an open neighbourhood of $b$, since the preimage $p_B^{-1}(V_b)=Q \cdot U_w= U$ is open. Moreover, the $Q$-equivariance of $q_W$ implies
\[
q_W^{-1}(p_B^{-1}(V_b))= G/\Gamma\times U = \coprod\limits_{qQ_w \in Q/Q_w} q\cdot (G/\Gamma\times U_w).
\]
By the commutativity of the diagram we have that \[p_M(q_W^{-1}(p_B^{-1}(V_b)))=p^{-1}(V_b).\]
Since $Q$ transitively permutes the connected components in $q_W^{-1}(p_B^{-1}(V_b))$, we have that $p_M$ restricts to a surjective covering map 
\[\psi \colon G/\Gamma\times U_w\to p^{-1}(V_b).\]
Note that for every $(g,v)\in G/\Gamma \times U_w$ we have that 
\[
q\cdot (g,v)\in G/\Gamma \times U_w \Leftrightarrow q\cdot v \in U_w \Leftrightarrow q\in Q_w.
\]
Since $Q_w$ acts freely by deck transformations, this readily implies that for every $x\in p^{-1}(V_b)$
\[ |\psi^{-1}(x)|=|p_M^{-1}(y) \cap (G/\Gamma \times U_w)|=|Q_w| < +\infty.\]
This means that $Q_w$ transitively permutes the points in $\psi^{-1}(x)$ for every $x\in p^{-1}(V_b)$. Hence the map $\psi$ is a regular finite covering with automorphism group $\textup{Aut}(\psi)\cong Q_w$.
\end{proof}
The previous result explains why the manifold $G/\Gamma$ is called \emph{typical} fiber:
when $Q$ acts effectively, the set of points with trivial stabilizer
\[B_\text{reg}=\{b\in B | \ Q_w=1 \ \forall w\in p_B^{-1}(b)\}\]
is open and dense \cite[p. 12]{caramello}.
\section{Proof of the main theorem}\label{sec:proof}

In this section we prove Theorem~\ref{thm:intro:amcat:complete}:

\begin{thm}[$\G$-category of certain complete affine manifolds]\label{thm:main:amcat}
   Let $M$ be a closed connected complete affine manifold such that its fundamental group contains an infinite amenable normal subgroup. Then \[\amcat(M) \leq \cat_{\Poly}(M) \leq \dim(M).\]
\end{thm}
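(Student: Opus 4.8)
The plan is to realize $M$ as an injective Seifert fiber space with torus typical fiber by means of Kamishima's theorem, and then to bound $\cat_{\Poly}$ of such a space by combining the local description in Proposition~\ref{prop:local:structure:SF} with the elementary fact that the underlying space of a $d$-dimensional orbifold has covering dimension $d$. Since $\Poly\subseteq\Am$, the inequality $\amcat(M)\le\cat_{\Poly}(M)$ is free, so I would work throughout with $\cat_{\Poly}$.

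\emph{Step 1: producing the Seifert structure.} First I would promote the hypothesis ``$\pi_1(M)$ has an infinite amenable normal subgroup'' to ``$\pi_1(M)$ has an infinite \emph{abelian} normal subgroup'', which is what Example~\ref{ex:inj:SF}(4) requires. Let $N\trianglelefteq\pi_1(M)$ be the given infinite amenable normal subgroup. Because $\pi_1(M)$ acts freely and properly on $\widetilde{M}\cong\R^n$, the group $N$ is torsion-free; being linear and amenable it contains no nonabelian free subgroup, so combining the Tits alternative with Mostow's theorem that discrete subgroups of connected solvable Lie groups are polycyclic shows that $N$ is virtually polycyclic, in particular finitely generated. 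Intersecting the (finitely many) $\pi_1(M)$-conjugates of a polycyclic finite-index subgroup of $N$ gives a subgroup $N_0\trianglelefteq\pi_1(M)$ that is polycyclic, torsion-free, infinite and of finite index in $N$. In an infinite polycyclic group the Fitting subgroup is infinite; hence $\textup{Fitt}(N_0)$ is an infinite nilpotent subgroup, characteristic in $N_0$, and --- being nontrivial nilpotent and torsion-free --- has infinite center. That center is an infinite abelian subgroup, characteristic in $\textup{Fitt}(N_0)$ and hence normal in $\pi_1(M)$. As $\pi_1(M)$ is indecomposable ($M$ being closed complete affine), Example~\ref{ex:inj:SF}(4) applies and $M$ is a smooth injective Seifert fiber space $p\colon M\to B$ modeled on a product bundle $\R^k\times W\to W$ with typical fiber $\mathbb{T}^k=\R^k/\Z^k$, $k\ge 1$. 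Put $d:=\dim(W)$, so $B$ is a $d$-dimensional orbifold and $\dim(M)=k+d$.

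\emph{Step 2: bounding $\cat_{\Poly}$ of the Seifert space.} I would then show $\cat_{\Poly}(M)\le d+1$, which finishes the proof since $d+1\le d+k=\dim(M)$; this is the Seifert-fibering estimate (Theorem~\ref{thm:amcat:estimate:SF}) in the case of a torus typical fiber. By Proposition~\ref{prop:local:structure:SF}, each $b\in B$ has an open neighbourhood $V_b$ such that $p^{-1}(V_b)$ admits a regular finite covering diffeomorphic to $\mathbb{T}^k\times\R^d$; in particular $p^{-1}(V_b)$ is connected (it is the continuous image of the connected space $\mathbb{T}^k\times\R^d$) and $\pi_1(p^{-1}(V_b))$ is virtually $\Z^k$, hence finitely generated and of polynomial growth. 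Every subgroup of such a group is again finitely generated virtually polycyclic, hence of polynomial growth, so the image in $\pi_1(M)$ of the fundamental group of $p^{-1}(V_b)$, or of any open subset of it, lies in $\Poly$; thus $p^{-1}(V_b)$ and all its open subsets are $\Poly$-sets. Since the underlying space of $B$ has covering dimension $d$, a partition-of-unity argument refines $\{V_b\}_{b\in B}$ to an open cover $W_0,\dots,W_d$ of $B$ in which each $W_j$ is a disjoint union of open sets contained in members of $\{V_b\}$ (see~\cite{CLOT}). Then $\{p^{-1}(W_0),\dots,p^{-1}(W_d)\}$ is an open cover of $M$ by $d+1$ sets, each a disjoint union of $\Poly$-sets and hence --- Definition~\ref{defi:G-cat} allowing disconnected $\G$-sets --- itself a $\Poly$-set. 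Therefore $\cat_{\Poly}(M)\le d+1\le\dim(M)$, and $\amcat(M)\le\cat_{\Poly}(M)\le\dim(M)$.

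\emph{Main obstacle.} The genuine geometric content is packaged entirely in Kamishima's theorem (Example~\ref{ex:inj:SF}(4)) and in Proposition~\ref{prop:local:structure:SF}; the rest is bookkeeping. I expect Step~1 to be the subtle part: one must know that an amenable normal subgroup of an affine crystallographic group is virtually polycyclic --- this is where freeness and properness of the action are used, via torsion-freeness and discreteness inside a solvable Lie subgroup --- and then descend inside it to an \emph{infinite abelian} subgroup that remains normal in the whole fundamental group, this abelianity being exactly what forces the typical fiber to be a torus. The only thing to watch in Step~2 is that the typical fiber is a torus rather than a general amenable manifold: it is the polynomial growth, and not merely the amenability, of the local fundamental groups that gives the bound on $\cat_{\Poly}$ (hence on $\amcat$), and via the results quoted in the introduction also the vanishing of the stable integral simplicial volume and of the minimal volume entropy.
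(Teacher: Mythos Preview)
Your proof is correct and follows the same two-step strategy as the paper: reduce to Kamishima's theorem by producing an infinite abelian normal subgroup, then apply the Seifert-fibering estimate (Theorem~\ref{thm:amcat:estimate:SF}). The two implementations differ only in details. For Step~1, the paper stops at Tits' alternative (virtually solvable), passes to a \emph{characteristic} solvable finite-index subgroup~$S'\trianglelefteq_{\mathrm{char}} S$, and takes the last nontrivial term~$A$ of its derived series; this $A$ is abelian, characteristic in~$S$, hence normal in~$\pi_1(M)$, and torsion-free by asphericity---no polycyclicity or Mostow-type input is needed. Your route via a polycyclic normal $N_0$, its Fitting subgroup, and the center of the latter is correct but longer, and leans on the extra fact that discrete solvable subgroups of~$\GL(n+1,\R)$ are polycyclic (your ``Mostow'' citation), together with the finiteness of subgroups of fixed index in a finitely generated group to control the $\pi_1(M)$-conjugates. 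For Step~2, the paper refines the cover of the orbifold base by triangulating~$B$ and taking unions of open stars in the barycentric subdivision, whereas you invoke covering dimension and a partition-of-unity refinement; these are interchangeable standard devices and yield the same $(d+1)$-element $\Poly$-cover.
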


In order to deduce the previous theorem from Proposition~\ref{prop:local:structure:SF} we need the following result of independent interest:

\begin{thm}[$\G$-category of injective Seifert fiber spaces]\label{thm:amcat:estimate:SF}
In the situation of Setup~\ref{setup:SF}, let $M$ be a closed connected smooth manifolds and let $p\colon M \to B$ be a smooth injective Seifert fibering with typical fiber $G \slash \Gamma$. 
Suppose that $\pi_1(G \slash \Gamma)$ is amenable. Then \[\amcat(M) \leq \dim(M) - \dim(G \slash \Gamma) + 1.\]
Moreover, if $\pi_1(G \slash \Gamma)$ is virtually nilpotent, we also have
\[
\cat_{\Poly}(M) \leq \dim(M) - \dim(G \slash \Gamma) + 1.
\]
\end{thm}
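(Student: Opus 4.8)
The plan is to construct an open amenable cover of $M$ of the required cardinality directly from Proposition~\ref{prop:local:structure:SF}. First observe that, since $\ell^{-1}(\Gamma)$ is discrete in $G$, we have $\dim(G/\Gamma)=\dim(G)$, and since $\pi$ acts freely and properly on $G\times W$ while $Q$ is discrete we have $\dim(M)=\dim(G)+\dim(W)$ and $\dim(B)=\dim(W)$; hence $\dim(M)-\dim(G/\Gamma)+1=\dim(W)+1=\dim(B)+1$, and it suffices to produce an open amenable cover of $M$ of cardinality $\dim(B)+1$.

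The second step is the group-theoretic core. Fix $b\in B$ and let $V_b\subseteq B$ be as in Proposition~\ref{prop:local:structure:SF}, so that $p^{-1}(V_b)$ is connected and admits a finite regular covering diffeomorphic to $(G/\Gamma)\times\R^{\dim(W)}$. Then $\pi_1(p^{-1}(V_b))$ contains a subgroup of finite index isomorphic to $\pi_1(G/\Gamma)$, which is amenable by hypothesis; since amenability passes to subgroups, quotients and extensions, a virtually amenable group is amenable, so $\pi_1(p^{-1}(V_b))$ is amenable, and hence so is the image of $\pi_1(p^{-1}(V_b),x)$ in $\pi_1(M,x)$ for every $x$, being a quotient of an amenable group. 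Moreover, for any open $C\subseteq V_b$ the image of $\pi_1(p^{-1}(C),x)$ in $\pi_1(M,x)$ is a subgroup of the image of $\pi_1(p^{-1}(V_b),x)$, hence amenable as well.

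The third step is dimension-theoretic. The base $B=W/Q$ is a compact smooth orbifold of dimension $n=\dim(W)$, in particular a compact metrizable space of covering dimension $n$, and $\{V_b\}_{b\in B}$ is an open cover of $B$. Triangulate $B$ so finely that, after barycentric subdivision, the open star of every vertex is contained in some $V_b$, and colour each vertex of the subdivision by the dimension ($0,\dots,n$) of the simplex of the original triangulation of which it is the barycentre. Vertices lying in a common simplex of the subdivision have pairwise distinct colours, so for each colour $i$ the union $A_i$ of the open stars of the colour-$i$ vertices is a pairwise-disjoint union of open sets, each contained in some $V_b$, and $A_0,\dots,A_n$ cover $B$ (alternatively, invoke Ostrand's theorem to refine $\{V_b\}$ into $n+1$ discrete families). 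Set $U_i:=p^{-1}(A_i)$. Since $p^{-1}$ preserves disjointness, $U_i$ is a disjoint union of sets $p^{-1}(C)$ with $C\subseteq V_b$; for $x\in U_i$ the group $\im(\pi_1(U_i,x)\to\pi_1(M,x))$ is the image of the fundamental group of the component of $x$, one of the $p^{-1}(C)$, hence amenable by the second step. Thus $\{U_0,\dots,U_n\}$ is an open amenable cover of $M$ of cardinality $n+1$, proving $\amcat(M)\le\dim(M)-\dim(G/\Gamma)+1$. The statement for $\cat_{\Poly}$ follows by the same argument with ``amenable'' replaced by ``finitely generated of polynomial growth'': the class $\Poly$ is likewise closed under subgroups (finitely generated virtually nilpotent groups satisfy the maximal condition on subgroups), quotients, and extensions by and of finite groups, and $\pi_1(G/\Gamma)$ is finitely generated, being the fundamental group of the compact typical fibre $G/\Gamma$.

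I expect the main obstacle to be the dimension-theoretic colouring: one must ensure that the orbifold base admits a triangulation (or is otherwise amenable to covering-dimension techniques) compatible with $\{V_b\}$ and that the resulting $n+1$ families are genuinely discrete, so that the fundamental group of each $U_i$ reduces to that of a single piece $p^{-1}(C)$. Everything else is routine bookkeeping on how amenability and polynomial growth propagate through the finite covers, quotients and subgroups involved.
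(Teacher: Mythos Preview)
Your proposal is correct and follows essentially the same route as the paper: both use Proposition~\ref{prop:local:structure:SF} to produce a finite cover $\{V_b\}$ of the compact orbifold base whose preimages have (virtually) amenable/nilpotent fundamental group, then triangulate $B$ and colour the barycentric subdivision by simplex dimension to refine this into $\dim(B)+1$ families of disjoint open sets, and finally pull back along $p$. The only cosmetic differences are that the paper cites the existence of a \emph{characteristic} finite-index nilpotent subgroup to handle the finite extension in the $\Poly$ case (where you invoke closure of $\Poly$ under finite extensions directly), and the paper does not spell out the dimension count $\dim(M)-\dim(G/\Gamma)=\dim(B)$ as explicitly as you do.
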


We use the following notation: given two covers $\mathcal{U}$ and $\mathcal{V}$ of $M$, we say that $\mathcal{U}$ is \emph{subordinate} to $\mathcal{V}$ if 
for every $U_i \in \mathcal{U}$ each path-connected component $U_i^a$ of $U_i$ is entirely contained in some $V_j^a \in \mathcal{V}$.

\begin{proof}
Let $k:=\dim(B)=\dim(M)-\dim(G\slash\Gamma)$. Since $M$ is closed and the map $p$ is surjective, $B$ must be compact. Hence by Proposition~\ref{prop:local:structure:SF} there exists a finite open cover $\mathcal{V} := \{V_{1}, \cdots, V_{s}\}$ of $B$ such that each preimage $p^{-1}(V_{j})$ admits a regular covering of finite degree diffeomorphic to $G/\Gamma\times\R^{\dim(W)}$ (for sake of notational simplicity we do not stress the dependence of the sets $V_j$ on the points $b_j \in B$). By construction $p^{-1}\mathcal{V} := \{p^{-1}(V_{j})\}_{j = 1}^s$ is an open cover of $M$.
We want to construction now a new open cover of $M$ with cardinality $k+1$ that is subordinate to $p^{-1}\mathcal{V}$.

To this end we define a new open cover~$\mathcal{U}$ of $B$ of cardinality $k+1$ that is subordinate to $\mathcal{V}$. Since $B$ is a smooth compact orbifold, $B$ is a triangulable compact space~\cite[Theorem 2.4.1]{caramello}. In particular, there exists a sufficiently small triangulation~$T$ of $B$ such that each open star of the vertices is entirely contained in some set~$V_{j}$ of the cover~$\mathcal{V}$. The desired open cover~$\mathcal{U} := \{U_0, \cdots, U_k\}$ of~$B$ is defined as follows: each~$U_i$ consists of the disjoint union of the open stars of the vertices of the barycentric subdivision of~$T$ corresponding to the $i$-simplices in $T$.

By pulling back $\mathcal{U}$ along $p$ we obtain an open cover \[p^{-1}\mathcal{U} := \{p^{-1}(U_0),\dots, p^{-1}(U_{k}) \}\] of $M$ of cardinality $k+1$ that is subordinate to $p^{-1}\mathcal{V}$.

Fix an index $i \in \{0,\dots, n\}$ and a basepoint $x \in p^{-1}(U_i)$.
Since $p^{-1}\mathcal{U}$ is subordinate to $p^{-1}\mathcal{V}$, there exists an index ${j_0} \in \{1,\dots, s\}$ such that the path connected component of $p^{-1}(U_i)$ containing $x$ is contained in $p^{-1}(V_{{j_0}})$. Then the homomorphism induced by the inclusion 
$\pi_1(p^{-1}(U_i) \hookrightarrow M, x)$ 
factors through the group~$\pi_1(p^{-1}(V_{j_0}, x))$.

Since both the classes of amenable groups~\cite[Proposition~3.4]{frigerio} and of finitely generated virtually nilpotent groups~\cite[Proposition~4.1]{wolf} are closed under taking subgroups and quotients, we are left to show the following:
\begin{enumerate}
    \item[(\emph{1})] If $\pi_1(G \slash \Gamma)$ is amenable, then also the groups~$p^{-1}(V_{j})$ are amenable for all $j = 0, \cdots, s$, and 
    \item[(\emph{2})] If $\pi_1(G \slash \Gamma)$ is virtually nilpotent, then the groups~$p^{-1}(V_{j})$ are finitely generated virtually nilpotent groups for all $j = 0, \cdots, s$.
\end{enumerate}
\emph{Ad~1)} Since each $p^{-1}(V_{j})$ admits a regular covering of finite degree diffeomorphic to $G/\Gamma\times\R^{\dim(W)}$, we have that its fundamental group is amenable (because virtually amenable groups are amenable). This proves that if the group $\pi_1(G \slash \Gamma)$ is amenable, then we have 
\[
\amcat(M) \leq k+1 = \dim(M) - \dim(G \slash \Gamma) + 1.
\]
\emph{Ad~2)} First observe that $G \slash \Gamma$ is compact since finitely covers the preimage $p^{-1}(b)$ for some $b \in B$ inside the compact manifold~$M$. Hence $\pi_1(G \slash \Gamma)$ is finitely generated. Moreover, it is well known that $\pi_1(G\slash\Gamma)$ contains a finite-index nilpotent subgroup that is \emph{characteristic}~\cite[p. 355]{makarenkoshumyatski}. As before the fundamental group of each~$p^{-1}(V_{j})$ is a finite extension of $\pi_1(G \slash \Gamma)$ and so it is a finitely generated virtually nilpotent group. This proves that if the group $\pi_1(G \slash \Gamma)$ is virtually nilpotent, then we have 
\[
\cat_{\Poly}(M) \leq k+1 = \dim(M) - \dim(G \slash \Gamma) + 1. \qedhere
\]
\end{proof}

We are now ready to prove Theorem~\ref{thm:main:amcat}:

\begin{proof}[Proof of Theorem~\ref{thm:main:amcat}]
    Let $M$ be a closed connected complete affine manifold and suppose that $S \trianglelefteq \pi_1(M)$ is an infinite amenable normal subgroup. By assumption of completeness, we know that the holonomy map is injective and so we have the following inclusions \[S \leq \pi_1(M) \leq \Aff(\R^n) \leq GL(n+1; \R).\] This shows that $S$ is an amenable linear group over a field of characteristic~$0$. Thus by the Tits alternative~\cite[Theorem 1]{tits1972free} we have that $S$ is a virtually solvable group. Then it is known that $S$ contains a \emph{characteristic} solvable group of finite index $S'\trianglelefteq_{\textup{char}} S$ \cite[p. 355]{makarenkoshumyatski}. Let $A$ be the smallest non-trivial group in the derived series of $S'$. Note that $A$ is characteristic in $S'$, hence it is characteristic in $S$ and normal in $\pi_1(M)$. Moreover, $A$ is abelian by construction.
     Since $M$ is aspherical, we also have that $A$ must be torsion free. Hence it follows that $\pi_1(M)$ contains an infinite abelian normal subgroup. By Example~\ref{ex:inj:SF}.(4), we can conclude that $M$ is a smooth injective Seifert fiber space whose typical fiber is a torus of positive dimension. By Theorem~\ref{thm:amcat:estimate:SF} the claim follows.
\end{proof}

\subsection{Vanishing of topological volumes}\label{subsec:vanishing}

The vanishing of simplicial volume, stable integral simplicial volume and minimal volume entropy is a straightforward application of Theorem~\ref{thm:main:amcat}. Indeed, since closed complete affine manifolds have residually finite fundamental group~\cite{malcev}, combining Theorem~\ref{thm:main:amcat} with Gromov's Vanishing Theorem and its generalization (Theorems~\ref{thm:gromov:van:thm} and Theorem~\ref{thm:van:thm:sisv}) we have:
\begin{cor}[integral approximation for certain complete affine manifolds]\label{cor:van:sv:stisv}
    Let $M$ be an oriented closed connected complete affine manifold such that its fundamental group contains an infinite amenable normal subgroup. Then
    \[
    0 =\sv{M} = \stisv{M}.
    \]
\end{cor}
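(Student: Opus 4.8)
The plan is to deduce the statement directly from Theorem~\ref{thm:main:amcat} together with the vanishing theorems recalled in Section~\ref{sec:sv}, checking along the way the two elementary structural facts about complete affine manifolds that are needed. First I would observe that $M$ satisfies the hypotheses of Theorem~\ref{thm:main:amcat}, so that $\amcat(M) \leq \dim(M)$. Gromov's Vanishing Theorem (Theorem~\ref{thm:gromov:van:thm}) then immediately gives $\sv{M} = 0$. To upgrade this to $\stisv{M} = 0$ I would apply the stronger Theorem~\ref{thm:van:thm:sisv}, whose remaining hypotheses are asphericity and residual finiteness of $\pi_1(M)$. Asphericity is immediate from completeness: the developing map $D\colon \widetilde{M} \to \R^n$ is a diffeomorphism, hence $\widetilde{M}$ is contractible. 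Residual finiteness follows because $M$ is closed, so $\pi_1(M)$ is finitely generated, and the holonomy representation of a complete affine manifold is injective, so $\pi_1(M)$ embeds into $\Aff(\R^n) \leq \GL(n+1,\R)$; thus $\pi_1(M)$ is a finitely generated linear group over a field of characteristic $0$ and therefore residually finite by Malcev's theorem~\cite{malcev}. With these two conditions in hand, Theorem~\ref{thm:van:thm:sisv} yields $\stisv{M} = \sv{M} = 0$, which is the claim; the orientability hypothesis enters only to make the real and integral fundamental classes, and hence the two seminorms, well defined.

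There is essentially no obstacle here beyond the amenable category estimate already established in Theorem~\ref{thm:main:amcat}: the argument is a straightforward combination of that estimate with Theorem~\ref{thm:van:thm:sisv}, once one notes that the extra hypotheses of the latter (asphericity and residual finiteness of $\pi_1$) are automatically satisfied by every closed complete affine manifold. For completeness one may also record that Theorem~\ref{thm:main:amcat} in fact produces the sharper bound $\cat_{\Poly}(M) \leq \dim(M)$, which is precisely the input required for the companion vanishing of the minimal volume entropy via Theorem~\ref{thm:BS:vanishing:minent}.
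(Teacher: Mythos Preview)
Your proof is correct and follows exactly the same route as the paper: apply Theorem~\ref{thm:main:amcat} to get $\amcat(M)\leq\dim(M)$, then invoke Theorems~\ref{thm:gromov:van:thm} and~\ref{thm:van:thm:sisv}, noting residual finiteness of $\pi_1(M)$ via Mal'cev's theorem. You spell out the asphericity and residual finiteness checks more explicitly than the paper does (the paper simply cites~\cite{malcev} in one line), but the argument is identical.
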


Similarly, combining the vanishing result for the minimal volume entropy by Babenko and Sabourau (Theorem~\ref{thm:BS:vanishing:minent}) with the estimate for $\cat_{\Poly}(M)$ proved in Theorem~\ref{thm:main:amcat} we get the following:
\begin{cor}[minimal volume entropy of certain complete affine manifolds]\label{cor:minent:van}
Let $M$ be an oriented closed connected complete affine manifold such that its fundamental group contains an infinite amenable normal subgroup. Then $\textup{minent}(M) = 0$.
\end{cor}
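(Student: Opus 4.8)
The plan is to derive this statement immediately from the refined version of Theorem~\ref{thm:main:amcat} together with the Babenko--Sabourau Vanishing Theorem for minimal volume entropy (Theorem~\ref{thm:BS:vanishing:minent}). The crucial observation is that Theorem~\ref{thm:main:amcat} does not merely bound the amenable category of $M$, but also the finer invariant $\cat_{\Poly}(M)$, and it is precisely this quantity that appears in the hypothesis of Theorem~\ref{thm:BS:vanishing:minent}.

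Concretely, I would first note that a complete affine manifold is by definition a smooth manifold, so the standing smoothness assumption of Theorem~\ref{thm:BS:vanishing:minent} holds automatically; moreover $M$ is oriented, closed and connected by hypothesis. Since $\pi_1(M)$ contains an infinite amenable normal subgroup, Theorem~\ref{thm:main:amcat} applies and yields $\cat_{\Poly}(M) \leq \dim(M)$. Feeding this inequality into Theorem~\ref{thm:BS:vanishing:minent} gives $\textup{minent}(M) = 0$, which is the desired conclusion.

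There is essentially no obstacle left at this stage, since the whole content has been front-loaded into Theorem~\ref{thm:main:amcat}. The only point worth emphasising is why the plain amenable category does not suffice: the available vanishing theorem for the minimal volume entropy requires open covers whose members have \emph{polynomial growth} image in $\pi_1(M)$, not merely amenable image. This is exactly why the proof of Theorem~\ref{thm:main:amcat} is arranged so that, in the affine case, the Seifert structure has torus typical fiber, forcing every member of the constructed cover to have virtually nilpotent (indeed virtually abelian) image in $\pi_1(M)$; granting that, the corollary follows at once.
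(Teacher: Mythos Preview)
Your proposal is correct and matches the paper's own argument: the paper simply combines the bound $\cat_{\Poly}(M)\leq\dim(M)$ from Theorem~\ref{thm:main:amcat} with the Babenko--Sabourau vanishing result (Theorem~\ref{thm:BS:vanishing:minent}). Your additional remarks on smoothness and on why $\cat_{\Poly}$ rather than $\amcat$ is needed are accurate and make the dependence on the refined statement explicit.
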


\section{Concluding remarks}\label{sec:concluding:rem}
While at the time of writing this paper Question \ref{quest:luck} remains widely open in full generality, some partial results have been achieved when the fundamental group of $M$, its amenable normal subgroup, and/or the resulting quotient satisfy additional hypotheses. Indeed a positive answer to Question~\ref{quest:luck} is known, e.g., in the following cases: let $M$ be an orientable closed connected aspherical manifold and let $\pi\cong\pi_1(M)$ be its fundamental group. Suppose that there exists a non-trivial amenable normal subgroup~$\Gamma \trianglelefteq \pi$. Then the simplicial volume of $M$ vanishes if one of the following additional conditions is satisfied
\begin{enumerate}
    \item The map induced in top-dimensional cohomology by the quotient $H^{\dim(M)}(\pi/\Gamma;\R)\to H^{\dim(M)}(\pi;\R)$ is zero~\cite[Lemma 2.1]{BCL};
    \item If $\Gamma = \mathcal{Z}(\pi)$ is the center of~$\pi$, $\pi$ is infinite-index presentable by products and $\pi/\mathcal{Z}(\pi)$ is not presentable by products (both terminologies are due to Neofytidis)~\cite[Corollary 1.5]{neofytidis}.
\end{enumerate}
Using a purely algebraic criterion for the existence of smooth injective Seifert fiber spaces by Lee and Raymond~\cite[Theorem~11.1.4]{leeraymond}, we obtain the following:
\begin{thm}[algebraic criterion for the existence of aspherical manifolds as in L\"uck's question]\label{thm:polyZkernel}
   Let $\Gamma$ be a virtually poly-$\Z$ group and let $Q$ be a discrete group acting properly, smoothly, and cocompactly on some smooth contractible manifold $W$. Suppose that $\pi$ is a discrete torsion-free group fitting in the following short exact sequence
   \[
   1 \to \Gamma \to \pi \to Q \to 1.
   \]
   Then there exists a smooth aspherical manifold $M$ with $\pi_1(M)\cong \pi$ such that $\cat_{\Poly}(M) \leq \dim(M)$ and
    \begin{equation*}
        \sv M= \stisv{M}= \textup{minent}(M) = 0.
    \end{equation*}
\end{thm}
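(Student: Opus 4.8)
The plan is to realize $M$ via the Seifert construction of Lee--Raymond, to note that the resulting injective Seifert fibering of $M$ can be taken to have a torus of positive dimension as typical fiber, and then to invoke Theorem~\ref{thm:amcat:estimate:SF} together with the vanishing theorems of Section~\ref{sec:sv}.

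First I would feed the short exact sequence $1\to\Gamma\to\pi\to Q\to 1$ into the algebraic criterion \cite[Theorem~11.1.4]{leeraymond}: since $\pi$ is torsion free, $\Gamma$ is virtually poly-$\Z$, and $Q$ acts properly, smoothly and cocompactly on the contractible manifold $W$, this produces a datum as in Setup~\ref{setup:SF} whose associated smooth injective Seifert fiber space $M=(G\times W)/\pi$ satisfies $\pi_1(M)\cong\pi$; here $G$ is a simply connected solvable Lie group, $\Gamma_0:=\pi\cap\ell(G)$ is a torsion-free cocompact lattice in $G$, and the typical fiber is the (infra-)solvmanifold $G/\Gamma_0$. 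Since $\pi$ is torsion free it acts freely on the contractible space $G\times W$ and $Q$ is cocompact on $W$, so $M$ is a closed connected aspherical smooth manifold; we may assume $\Gamma$ is infinite, the finite case falling outside L\"uck's question. As $\pi_1(G/\Gamma_0)=\Gamma_0$ is solvable, hence amenable, Theorem~\ref{thm:amcat:estimate:SF} already gives $\amcat(M)\le\dim(M)$.

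The point of the second step is to upgrade this to $\cat_{\Poly}(M)\le\dim(M)$, which is what the minimal volume entropy requires. A poly-$\Z$ group need not be virtually nilpotent, so the ``moreover'' part of Theorem~\ref{thm:amcat:estimate:SF} does not apply to the fibering above; the remedy is a coarser Seifert fibering with torus fiber. Being infinite, torsion free and virtually polycyclic, $\Gamma$ contains a characteristic infinite abelian subgroup $A$ (for instance the centre of the Fitting subgroup of a characteristic finite-index poly-$\Z$ subgroup); being characteristic in $\Gamma\trianglelefteq\pi$, $A$ is normal in $\pi$, and after replacing $A$ by $A\cap\Gamma_0$ — still an infinite abelian normal subgroup of $\pi$ — we have $A\le\Gamma_0$ and $A\cong\Z^{r}$ with $r\ge 1$. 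Let $\overline{A}\cong\R^{r}$ be the real Mal'cev span of $A$ in $G$ and set $A':=\pi\cap\ell(\overline{A})\supseteq A$: then $\overline{A}$ is a $\pi$-invariant closed connected abelian normal subgroup of $G$, $A'$ is again a lattice in $\overline{A}$, and collapsing the $\ell(\overline{A})$-orbits exhibits $M$ as a smooth injective Seifert fibering over the aspherical base $\big((\overline{A}\backslash G)\times W\big)/(\pi/A')$ — an orbifold when $\pi/A'$ has torsion — with typical fiber $\overline{A}/A'\cong T^{r}$; equivalently this is the instance of the Lee--Raymond construction attached to $1\to A'\to\pi\to\pi/A'\to 1$, with typical-fiber group the lattice $A'$ in the vector group $\overline{A}$. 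Since $\pi_1(T^{r})\cong\Z^{r}$ is nilpotent, the ``moreover'' part of Theorem~\ref{thm:amcat:estimate:SF} now yields
\[
\cat_{\Poly}(M)\ \le\ \dim(M)-r+1\ \le\ \dim(M).
\]

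Finally I would quote the vanishing theorems: $M$ is a closed connected smooth aspherical manifold with $\cat_{\Poly}(M)\le\dim(M)$ (hence also $\amcat(M)\le\dim(M)$), so Gromov's Vanishing Theorem~\ref{thm:gromov:van:thm} gives $\sv{M}=0$, Babenko--Sabourau's Theorem~\ref{thm:BS:vanishing:minent} gives $\textup{minent}(M)=0$, and, provided $\pi\cong\pi_1(M)$ is residually finite (which I would include among the natural hypotheses on $Q$ — it holds, e.g., when $Q$ is virtually polycyclic), Theorem~\ref{thm:van:thm:sisv} gives $\stisv{M}=0$. The main obstacle is the middle step: one must verify that the data $\big(A',\ \pi/A'\actson(\overline{A}\backslash G)\times W,\ \pi\big)$ genuinely satisfies the requirements of Setup~\ref{setup:SF} — in particular that $\ell(\overline{A})$ is normalized by $\pi$, that $\pi/A'$ acts properly on $(\overline{A}\backslash G)\times W$ (allowing torsion, so that the base is merely an orbifold), and that the Seifert space so obtained is, up to homotopy — which is all that matters, since $M$ is aspherical and $\sv$, $\stisv$, $\textup{minent}$, $\cat_{\Poly}$ are homotopy invariants of closed manifolds — the original $M$. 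This is exactly the kind of bookkeeping around the ``infra'' part of $\Gamma$ that the Lee--Raymond framework is designed to handle, so I expect no essential difficulty; a secondary delicate point is the residual finiteness used for the stable integral simplicial volume.
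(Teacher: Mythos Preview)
Your overall strategy is sound, but the middle step is an unnecessary detour compared with the paper. In the paper's proof the Lee--Raymond construction of \cite[Theorem~11.1.4]{leeraymond} is unpacked: one takes $\Delta\trianglelefteq_{\mathrm{char}}\Gamma$ to be the maximal \emph{nilpotent} characteristic subgroup (the Fitting subgroup). Because $\pi$ is torsion free and $\Gamma$ is virtually poly-$\Z$, $\Delta$ is torsion-free finitely generated nilpotent, hence by Mal'cev a cocompact lattice in a simply connected \emph{nilpotent} Lie group $N$; the Seifert fiber space produced by Lee--Raymond then has typical fiber the nilmanifold $N/\Delta$ and base $(\R^s\times W)/(\pi/\Delta)$. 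Since $\pi_1(N/\Delta)\cong\Delta$ is already nilpotent, the ``moreover'' clause of Theorem~\ref{thm:amcat:estimate:SF} applies directly and gives $\cat_{\Poly}(M)\le\dim(M)$ in one step.

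By contrast, you first read off from Lee--Raymond a fibering with \emph{solvmanifold} typical fiber $G/\Gamma_0$, observe that this only yields $\amcat(M)\le\dim(M)$, and then manufacture a second, coarser fibering with torus fiber by passing to an abelian characteristic subgroup and its Mal'cev span. This is plausible and would eventually work, but it forces exactly the ``bookkeeping'' you flag as the main obstacle (normality of $\ell(\overline{A})$, properness of the $\pi/A'$-action on $(\overline{A}\backslash G)\times W$, identification of the new Seifert space with the original $M$). The paper's route avoids all of this: because the Lee--Raymond machine is itself built around the nilpotent Fitting subgroup, the typical fiber is nilpotent from the outset and no re-fibering is needed.

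One point you raise that the paper glosses over: the appeal to Theorem~\ref{thm:van:thm:sisv} for $\stisv{M}=0$ requires $\pi$ to be residually finite, which is not part of the hypotheses and is not automatic for arbitrary $Q$. Your caution here is warranted.
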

Recall that a  group $\Gamma$ is \emph{poly-}$\Z$  if it admits a subnormal series 
\[
1=\Gamma_0\trianglelefteq\Gamma_1\trianglelefteq \dots\trianglelefteq\Gamma_k\cong \Gamma
\]
such that $\Gamma_{i+1}/\Gamma_i \cong \Z$ for all $i\in\{0,\dots, k-1\}$.
\begin{proof}
    We follow the proof by Lee and Raymond~\cite[Theorem~11.1.4]{leeraymond}. Let $\Delta$ be the maximal nilpotent characteristic subgroup of $\Gamma$~\cite[Subsection~8.4.6]{leeraymond}. Since $\Gamma$ is virtually poly-$\Z$ and $\pi$ is torsion-free, then $\Delta$ is a torsion-free finitely generated nilpotent group and so by a well-known theorem by Mal'cev
    it is a cocompact lattice of a unique simply connected nilpotent Lie group~$N$~\cite{mal1949class}. Moreover, by construction one can show that  $\Gamma \slash \Delta$ is virtually $\mathbb{Z}^s$ for some $s \geq 0$. Since $\Delta$ is characteristic in $\Gamma$, we obtain the following short exact sequence of groups
    \[
    1 \to \Delta \to \pi \to \pi \slash \Delta \to 1.
    \]
    Then in their proof Lee and Raymond show that there exists a smooth aspherical manifold~$M$ with fundamental group isomorphic to~$\pi$ with the following property: $M$ is a smooth injective Seifert fiber space with typical fiber $N \slash \Delta$ and base $(\R^s \times W) \slash (\pi \slash \Delta)$.
    The result now follows from the combination of Theorem~\ref{thm:amcat:estimate:SF} with the vanishing results about the stable integral simplicial volume (Theorems~\ref{thm:gromov:van:thm} and~\ref{thm:van:thm:sisv}) and the minimal volume entropy (Theorem~\ref{thm:BS:vanishing:minent}).
\end{proof}

One may wonder whether Theorem~\ref{thm:polyZkernel} provides a complete answer to Question~\ref{quest:luck} in the case of poly-$\mathbb{Z}$ normal groups. However, this reduces to the following variation of a question by Lee and Raymond~\cite[Question~11.1.9.(i)]{leeraymond}:

\begin{question}
Let $M$ be a closed aspherical manifold and let $\Gamma$ be an infinite poly-$\mathbb{Z}$ normal subgroup of $\pi_1(M)$. Let $Q := \pi_1(M) \slash \Gamma$. Does there exists a contractible manifold $W$ on which $Q$ acts properly with compact quotient?
\end{question}

On the other hand, one may also wonder whether Theorem~\ref{thm:polyZkernel} is a consequence of the result by Bucher--Connell--Lafont above~\cite[Lemma~2.1]{BCL}. Namely, the question reduces to determine whether the real cohomological dimension of $Q$ is smaller than the real cohomological dimension of $\pi$. However, according to Lee and Raymond~\cite[Question~11.1.9.(ii)]{leeraymond} also this question seems to be widely open:

\begin{question}
    Let $Q$ be a discrete group that acts properly and cocompactly on a contractible manifold $W$. Does $Q$ contain a torsion-free subgroup of finite index?
\end{question}

\bibliographystyle{alpha}
\bibliography{svbib}

\end{document}